\newtheorem{thm}{Theorem}
\newcommand{\intprod}{\,\rule{5pt}{.3pt}\rule{.3pt}{7pt}\;}
\newcommand{\xoo}[3]{\begin{picture}(40,11)
\put(4,1.5){\makebox(0,0){$\times$}}
\put(20,1.5){\makebox(0,0){$\bullet$}}
\put(36,1.5){\makebox(0,0){$\bullet$}}
\put(4,1.5){\line(1,0){32}}
\put(4,8){\makebox(0,0){$\scriptstyle #1$}}
\put(20,8){\makebox(0,0){$\scriptstyle #2$}}
\put(36,8){\makebox(0,0){$\scriptstyle #3$}}
\end{picture}}
\newcommand{\ooo}[3]{\begin{picture}(40,11)
\put(4,1.5){\makebox(0,0){$\bullet$}}
\put(20,1.5){\makebox(0,0){$\bullet$}}
\put(36,1.5){\makebox(0,0){$\bullet$}}
\put(4,1.5){\line(1,0){32}}
\put(4,8){\makebox(0,0){$\scriptstyle #1$}}
\put(20,8){\makebox(0,0){$\scriptstyle #2$}}
\put(36,8){\makebox(0,0){$\scriptstyle #3$}}
\end{picture}}
\newcommand{\xox}[3]{\begin{picture}(40,11)
\put(4,1.5){\makebox(0,0){$\times$}}
\put(20,1.5){\makebox(0,0){$\bullet$}}
\put(36,1.5){\makebox(0,0){$\times$}}
\put(4,1.5){\line(1,0){32}}
\put(4,8){\makebox(0,0){$\scriptstyle #1$}}
\put(20,8){\makebox(0,0){$\scriptstyle #2$}}
\put(36,8){\makebox(0,0){$\scriptstyle #3$}}
\end{picture}}
\newcommand{\btwo}[2]{\begin{picture}(28,11)
\put(4,1.5){\makebox(0,0){$\times$}}
\put(24,1.5){\makebox(0,0){$\bullet$}}
\put(5,2.5){\line(1,0){20}}
\put(5,0.5){\line(1,0){20}}
\put(14,1.5){\makebox(0,0){$\rangle$}}
\put(4,8){\makebox(0,0){$\scriptstyle #1$}}
\put(24,8){\makebox(0,0){$\scriptstyle #2$}}
\end{picture}}
\newcommand{\gtwo}[2]{\begin{picture}(28,10)
\put(4,1.1){\makebox(0,0){$\bullet$}}
\put(24,1.5){\makebox(0,0){$\times$}}
\put(4,3.5){\line(1,0){18}}
\put(4,1.5){\line(1,0){20}}
\put(4,-.5){\line(1,0){18}}
\put(14,1.5){\makebox(0,0){$\langle$}}
\put(4,8){\makebox(0,0){$\scriptstyle #1$}}
\put(24,8){\makebox(0,0){$\scriptstyle #2$}}
\end{picture}}
\DeclareSymbolFont{script}{U}{eus}{m}{n}
\DeclareMathSymbol{\Wedge}{0}{script}{"5E}
\begin{document}
\title[Flying saucers]{Aerodynamics of flying saucers}
\author[Michael Eastwood]{Michael Eastwood}
\address{\hskip-\parindent
School of Mathematical Sciences\\
University of Adelaide\\ 
SA 5005\\ 
Australia}
\email{meastwoo@gmail.com}
\author[Pawe\l\ Nurowski]{Pawe\l\ Nurowski}
\address{\hskip-\parindent
Centrum Fizyki Teoretycznej,
Polska Akademia Nauk, Al.~Lotnik\'ow 32/46, 02-668 Warszawa, Poland}
\email{nurowski@cft.edu.pl}
\subjclass{53A20, 53A30, 53A40}
\thanks{This work was supported by the Simons Foundation grant 346300 and the
\mbox{Polish} Government MNiSW 2015--2019 matching fund. It was written whilst
the first \mbox{author} was visiting the Banach Centre at IMPAN in Warsaw for
the Simons \mbox{Semester} `Symmetry and Geometric Structures' and during
another visit to \mbox{Warsaw} supported by the Polish National Science Centre
(NCN) via the POLONEZ grant 2016/23/P/ST1/04148, which received funding from
the European Union's Horizon 2020 research and innovation programme under the
Marie Sk{\l}odowska-Curie grant agreement No.~665778.}

\begin{abstract} We identify various structures on the configuration space $C$
of a flying saucer, moving in a three-dimensional smooth manifold~$M$. Always
$C$ is a five-dimensional contact manifold. If $M$ has a projective structure,
then $C$ is its twistor space and is equipped with an almost contact Legendrean
structure. Instead, if $M$ has a conformal structure, then the saucer moves
according to a CR structure on~$C$. With yet another structure on $M$, the
contact distribution in $C$ is equipped with a cone over a twisted cubic. This
defines a certain type of Cartan geometry on~$C$ (more specifically, a type of
`parabolic geometry') and we provide examples when this geometry is `flat,'
meaning that its symmetries comprise the split form of the exceptional Lie
algebra~${\mathfrak{g}}_2$.
\end{abstract}
\renewcommand{\subjclassname}{\textup{2010} Mathematics Subject Classification}
\maketitle

\setcounter{section}{-1}\vspace{-2pt}
\section{Introduction}
Throughout this article $M$ will be a $3$-dimensional smooth oriented manifold.
For $x\in M$, a non-zero element $\omega\in T_x^*M$ defines an oriented 
$2$-plane $\{X\in T_xM\mid X\intprod\omega=0\}$ at~$x$. 
\begin{center}\setlength{\unitlength}{2pt}
\begin{picture}(40,25)(0,33)
\linethickness{.7pt}
\qbezier (37.32,55) (34.82,59.33) (17.50,49.33)
\qbezier (17.50,49.33) (0.18,39.33) (2.68,35)
\linethickness{1pt}
\qbezier (2.68,35) (5.18,30.67) (22.5,40.67)
\qbezier (22.5,40.67) (39.82,50.67) (37.32,55)
\put(20,45){\makebox(0,0){\large$\bullet$}}
\thicklines
\put(20,45){\vector(-2,3){6}}
\put(20.15,45.1){\line(-2,3){5}}
\put(19.85,44.9){\line(-2,3){5}}
\end{picture}
\end{center}
Thus, we may realise the bundle of oriented two-planes in $TM$ as
\begin{equation}\label{C}
\frac{\{\omega\in T^*M\setminus\{\mbox{the zero section}\}\}}
{\omega\sim\lambda\omega\enskip\mbox{for}\enskip\lambda>0}
={\mathrm{Gr}}_2^+(TM)\xrightarrow{\,\pi\,}M.\end{equation}
We shall write $C\xrightarrow{\,\pi\,}M$ for this {\em configuration space\/}
of oriented saucers in~$M$ (flying saucers are oriented as they traditionally
have a cockpit). 
The basic intrinsic structure on $C$ is a contact distribution 
$H\subset TC$. We shall see that a saucer moves along a path in $C$ that is
everywhere tangent to $H$ if and only if its motion in $M$ is in directions
taken from its own disc. With this specification, arbitrary `rolls' are
allowed but, with more structure on~$M$, these rolls are constrained and
this gives rise to differential geometries on $C$ expressed in terms of various
algebraic structures on~$H$.

This article is organised as follows. Section~\ref{contact} discusses the
contact geometry on~$C$. In Section~\ref{attack} it is supposed that $M$ has a
projective structure and consequently we shall find an almost contact
Legendrean structure on~$C$ (originally due to Takeuchi~\cite{T} using methods
due to Tanaka). Then $M$ is projectively flat if and only if $C$ has maximal
symmetry ${\mathfrak{sl}}(4,{\mathbb{R}})$. Usual CR structures emerge in
Section~\ref{landing} (following LeBrun~\cite{LeB1}). In each case there are
links with twistor theory. Section~\ref{G2} explains how to endow $C$ with a
geometric structure modelled on the contact homogeneous space for the split
form of the exceptional Lie group~$G_2$ and we present some examples for which
this structure turns out to be `flat,' meaning that it is locally isomorphic to
the flat model.
Section~\ref{geometry_on_M} further investigates the geometry on $M$ that is 
needed to construct this `$G_2$ contact structure' on~$C$.

This article is concerned only with the geometry of $C$ and especially its
construction from, and relationship to, various geometrical features on~$M$. In
a companion article~\cite{EN}, we simply start with Euclidean space
$M={\mathbb{R}}^3$ and explain how the various aerodynamic options considered
here are reflected in the aerobatic man{\oe}uvres available to a pilot flying
according to these options.

The authors would like to thank Katja Sagerschnig and Travis Willse for many
helpful conversations.

\section{The contact structure on $C$}\label{contact}
In fact, the intrinsic contact structure on $C$ is, in addition, filtered. 
Specifically, we shall find canonically defined subbundles
$$TC\supset H\supset V,$$
where $H$ is the contact distribution and $V$ is 
the vertical subbundle of $\pi:C\to M$.
To define~$H$, we note the
canonical identification
$$TC/V=\pi^*TM$$
and observe that a point in $C$ is precisely a point $x\in M$ together with an
oriented $2$-plane in~$T_xM$. In other words, we have a tautologically defined
rank $2$ subbundle $P\subset\pi^*TM$ recording this subspace and we may
define $H$ as the inverse image of $P$ under $TC\to\pi^*TM$. In summary,
we have a canonical filtration  
\begin{equation}\label{canonical_filtration}
TC=\overbrace{\vphantom{(}L+{}\hspace{10pt}}^{\makebox[0pt]{$=\pi^*TM$}}
\hspace{-11pt}\underbrace{P+V\!}_{\textstyle= H}\,,\end{equation}
where $L$ is, by definition, the line bundle $TC/H$ and we are recording here 
the composition factors, with the rightmost bundle $V$ being the 
natural subbundle. 

It remains to see that $H$ is, indeed, a contact distribution. This is a
calculation in local co\"ordinates. Specifically, we recall that the cotangent
bundle $T^*M$ of any smooth manifold is equipped with the well-known
tautological $1$-form~$\theta$. In `canonical co\"ordinates' $(x^a,p_a)$
on~$T^*M$, we have $\theta=p_a\,dx^a$ (for details, see~\cite{C}). On $C$, we 
may use an affine chart $(x,y,z,a,b)\mapsto(x,y,z,a,b,1)$ to embed $C$ in 
$T^*M$ and pull-back $\theta$ to the $1$-form $a\,dx+b\,dy+dz$ whose kernel 
is~$H$. Then 
$$d(a\,dx+b\,dy+dz)=da\wedge dx+db\wedge dy$$
is the Levi form on~$H$, which is manifestly non-degenerate.

Alternatively, the Levi form on $H$ may be seen as arising from the canonical
symplectic form on $T^*M$ as follows. In canonical co\"ordinates $(x^a,p_a)$
on~$T^*M$, the symplectic form is $d\theta=dp_a\wedge dx^a$. It means that if
we use canonical co\"ordinates on the total space of
$T^*M\xrightarrow{\,\nu\,}M$ to split its tangent bundle as
\begin{equation}\label{splitting}T(T^*M)=
\begin{array}c\nu^*TM\\[-4pt] \oplus\\[-2pt] \nu^*T^*M\end{array}\ni
\left[\!\!\begin{array}{c}X^a\\ \omega_a\end{array}\!\!\right]
\leftarrowtail
\left[\!\!\begin{array}{c}0\\ p_a\end{array}\!\!\right]=\theta\end{equation}
then the symplectic form is
\begin{equation}\label{canonical_symplectic_form}
\left[\!\!\begin{array}{c}X^a\\ \omega_a\end{array}\!\!\right]\otimes
\left[\!\!\begin{array}{c}\tilde X^b\\ \tilde\omega_b\end{array}\!\!\right]
\longmapsto X^b\tilde\omega_b-\tilde X^b\omega_b,\end{equation}
(and is independent of choice of co\"ordinates (as we shall see in the next
section by a different argument)). Viewing $C$ as in~(\ref{C}), we see its
tangent bundle as
$$\left\{\left[\!\!\begin{array}{c}X^a\\ \omega_a\end{array}\!\!\right]\right\}
\raisebox{-3pt}{\LARGE$/$}\begin{array}{l}
\omega_a\sim\omega_a+tp_a\\
\mbox{for}\enskip t\in{\mathbb{R}}\end{array}$$
and $H$ as the subbundle for which $X^ap_a=0$. Evidently, the form
(\ref{canonical_symplectic_form}) descends to~$H$ (and is easily verified to be
the Levi form).

Flying tangent to $H$ in $C$ is saying exactly that the velocity of the saucer
in space is constrained to lie in its own disc. Otherwise, the pilot is free to
make arbitrary `rolls' and the Chow--Rashevskii Theorem~\cite{M} in this
context implies that a pilot flying with these man{\oe}uvres may park her craft
in an arbitrary location and orientation.

Finally in this section, we consider the abstract structure on $C$ arising
from its being a configuration space. Recall from (\ref{canonical_filtration})
that $C$ is equipped with a filtration on its tangent bundle
\begin{equation}\label{LPV}TC=L+P+V\end{equation}
in which $H=P+V$ is contact and the two-dimensional subbundle $V$ is
integrable. In fact, there are no local invariants of this arrangement.
\begin{thm} Suppose $C$ is a five-dimensional contact manifold with contact 
distribution~$H$. Suppose $V$ is a rank two integrable subbundle of~$H$. Then 
we may find local co\"ordinates $(x,y,z,a,b)$ on $C$ so that 
\begin{itemize}
\item $H$ is defined by the $1$-form\enskip $\lambda\equiv dz+a\,dx+b\,dy$,
\item $V$ is defined by $\lambda$ and the two $1$-forms\/ $dx$ and\/~$dy$. 
\end{itemize}
Thus, it is as if\/ $C$ were defined by $M$ with local co\"ordinates $(x,y,z)$.
\end{thm}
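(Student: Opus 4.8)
The plan is to realise the whole structure as pulled back from the three-dimensional leaf space of $V$, the only genuine input being the contact (nondegeneracy) condition. First I would apply the Frobenius theorem to the integrable rank two bundle $V$: locally there are three functionally independent first integrals, which I rename $x,y,z$, so that their differentials span the annihilator $V^\perp$ (a rank three subbundle of $T^*C$) and $V=\ker dx\cap\ker dy\cap\ker dz$. Completing $(x,y,z)$ to any coordinate system $(x,y,z,a_0,b_0)$, the coordinate fields $\partial_{a_0},\partial_{b_0}$ lie in $V$ and span it, so automatically $V=\langle\partial_{a_0},\partial_{b_0}\rangle$; the only remaining freedom is to rescale the defining form of $H$, to change the first integrals $x,y,z$, and to re-choose the fibre coordinates $a_0,b_0$. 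Since $V\subset H$, a local contact form $\alpha$ with $H=\ker\alpha$ annihilates $V$ and hence lies in $V^\perp$:
$$\alpha=P\,dx+Q\,dy+R\,dz$$
for functions $P,Q,R$ that do not vanish simultaneously.

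Next I would extract the content of the contact condition in these coordinates. Writing $d\alpha=dP\wedge dx+dQ\wedge dy+dR\wedge dz$ and splitting each differential into its horizontal part (in $dx,dy,dz$) and its vertical part (in $da_0,db_0$), a short computation shows that the only contribution to the top form $\alpha\wedge(d\alpha)^2$ comes from the vertical parts, giving
$$\alpha\wedge d\alpha\wedge d\alpha=\pm2\det\!\begin{pmatrix}P&Q&R\\ P_{a_0}&Q_{a_0}&R_{a_0}\\ P_{b_0}&Q_{b_0}&R_{b_0}\end{pmatrix}dx\wedge dy\wedge dz\wedge da_0\wedge db_0,$$
where subscripts denote partial derivatives. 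Thus $H$ is contact if and only if this $3\times3$ matrix is everywhere invertible, i.e.\ the three rows $\alpha$, $\partial_{a_0}\alpha$, $\partial_{b_0}\alpha$ are linearly independent sections of $V^\perp$.

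The final step is normalisation. As the top row $(P,Q,R)$ never vanishes, a constant linear change of the first integrals $x,y,z$ arranges $R\neq0$ near the chosen point. Now comes the crux: invertibility of the matrix above says exactly that, on each leaf $\{x,y,z=\mathrm{const}\}$, the map $(a_0,b_0)\mapsto(P/R,Q/R)$ is a local diffeomorphism---its Jacobian is a nonzero multiple of that determinant, equivalently the fibrewise map $(a_0,b_0)\mapsto[P:Q:R]\in\mathbb{P}^2$ is an immersion. Hence $a:=P/R$ and $b:=Q/R$ complete $(x,y,z)$ to a genuine coordinate system (still with $V=\langle\partial_a,\partial_b\rangle$), in which $\alpha=R\,(dz+a\,dx+b\,dy)$. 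Replacing $\alpha$ by $R^{-1}\alpha$ gives the contact form $\lambda=dz+a\,dx+b\,dy$, and since $\{dx,dy,\lambda\}$ again spans $V^\perp$, the bundle $V$ is cut out by $dx$, $dy$ and $\lambda$, as required.

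I expect the main obstacle to be precisely this last identification: recognising that contact nondegeneracy is the exact condition guaranteeing that the normalised quantities $P/R$, $Q/R$ are admissible fibre coordinates. Everything before it---Frobenius, writing $\alpha$ in the adapted coframe, and arranging $R\neq0$---is routine, and the computation of $\alpha\wedge(d\alpha)^2$, while it must be done carefully to see that only the vertical derivatives survive, presents no conceptual difficulty.
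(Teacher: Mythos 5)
Your proposal is correct and follows essentially the same route as the paper: Frobenius applied to $V$ produces the leaf coordinates $(x,y,z)$, the contact form is written in ${\mathrm{span}}\{dx,dy,dz\}$, and contact nondegeneracy is exactly what promotes the normalised coefficients to the missing fibre coordinates $(a,b)$. The only cosmetic difference is that the paper normalises $\lambda=dz+a\,dx+b\,dy$ first, so that $\lambda\wedge d\lambda\wedge d\lambda=2\,dx\wedge dy\wedge dz\wedge da\wedge db\neq0$ yields the coordinate statement immediately, whereas you carry $P,Q,R$ and verify the same fact through an explicit determinant and Jacobian computation.
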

\begin{proof} The following argument pertains locally. Choose $1$-forms 
$\lambda,\mu,\nu$  so that 
$$H=\lambda^\perp\quad\mbox{and}\quad V=(\lambda,\mu,\nu)^\perp.$$
Integrability of $V$ ensures, by Frobenius, that we can find co\"ordinates 
$(x,y,z,u,v)$ so that 
$$\lambda,\mu,\nu\in{\mathrm{span}}\{dx,dy,dz\}$$
and, since $\lambda\not=0$, we may rescale it and subtract appropriate
multiples thereof from $\mu$ and $\nu$ to suppose, without loss of generality,
that
\begin{equation}\label{lambda_mu_nu}
\lambda=dz+a\,dx+b\,dy\qquad \mu=p\,dx+q\,dy\qquad\nu=r\,dx+s\,dy,
\end{equation}
for suitable smooth functions $(a,b,p,q,r,s)$. Now, since $H$ is contact,
$$0\not=\lambda\wedge d\lambda\wedge d\lambda
=2\,dx\wedge dy\wedge dz\wedge da\wedge db$$
so $(x,y,z,a,b)$ may be used as local co\"ordinates instead. Finally, 
$$0\not=\lambda\wedge\mu\wedge\nu=(ps-qr)\,dx\wedge dy\wedge dz$$
so we may replace $\{\mu,\nu\}$ by $\{dx,dy\}$ without changing 
their span. \end{proof}

\section{The almost contact Legendrean geometry on $C$}\label{attack}

Firstly, we revisit the splitting (\ref{splitting}), now using torsion-free
connections instead of choosing co\"ordinates. As is well-known~\cite{L}, a
connection on $T^*M$ may be viewed as a splitting~(\ref{splitting}) of 
$T(T^*M)$, into 
{\em horizontal\/} and vertical subbundles. If we change connections, say
\begin{equation}\label{connection_change}
\widehat\nabla_aX^c=\nabla_aX^c+\Gamma_{ab}{}^cX^b,\end{equation}
(using Penrose's abstract index
notation~\cite{OT}) then the splitting changes according to
\begin{equation}\label{change_of_splitting}
\widehat{\left[\!\!\begin{array}{c}X^b\\ \omega_b\end{array}\!\!\right]}
=\left[\!\!\begin{array}{c}X^b\\ 
\omega_b+X^a\Gamma_{ab}{}^cp_c\end{array}\!\!\right].\end{equation}
Now, if we insist on using {\em torsion-free\/} connections, as we may, then
the {\em skew\/} form (\ref{canonical_symplectic_form}) is manifestly invariant
because $\Gamma_{ab}{}^c$ is {\em symmetric}.

In order to navigate in~$M$, we now suppose that this manifold is endowed with
a {\em projective differential geometric\/} structure. A detailed discussion,
specifically in $3$ dimensions, may be found in~\cite{DE}. We shall therefore
be brief in recalling the salient features. Although a projective structure may
be viewed as a type of path geometry (eminently suitable for flying in~$M$) an
operational viewpoint on projective structures is as an equivalence class of
torsion-free connections, where the notion of equivalence is that
\begin{equation}\label{projective_change}
\widehat\nabla_a\phi_b=\nabla_a\phi_b-\Upsilon_a\phi_b-\Upsilon_b\phi_a
\end{equation}
for an arbitrary $1$-form~$\Upsilon_a$. In (\ref{connection_change}) it means
that
$$\Gamma_{ab}{}^c=\Upsilon_a\delta_b{}^c+\Upsilon_b\delta_a{}^c$$
where $\delta_b{}^c$ is canonical pairing between vectors and covectors. Hence,
with (\ref{projective_change}) in place, 
the formula (\ref{change_of_splitting}) for the change in splitting becomes
\begin{equation}\label{projective_change_of_splitting}
\widehat{\left[\!\!\begin{array}{c}X^b\\ \omega_b\end{array}\!\!\right]}
=\left[\!\!\begin{array}{c}X^b\\ 
\omega_b+X^a\Upsilon_ap_b+X^cp_c\Upsilon_b\end{array}\!\!\right].\end{equation}
But with a chosen connection and hence a chosen splitting in place,
$$P\enskip\mbox{is the subspace of}\enskip\pi^*TM\enskip
\mbox{given by}\enskip\{X^b\mid X^bp_b=0\}$$
and, in any case $V$ is the quotient of $\pi^*T^*M$ given by 
$$\{\omega_b\}/\sim\enskip\mbox{where}\enskip
\omega_a\sim\omega_b+tp_a\enskip\forall\,t\in{\mathbb{R}}.$$
{From} (\ref{projective_change_of_splitting}) it follows at once that the
splitting $H=P\oplus V$ is projectively invariant. In summary, we
have proved the following. 
\begin{thm}\label{projective_theorem}
A projective structure on~$M$ gives rise to extra structure on its
configuration space~$C$. Specifically, the contact distribution $H$ canonically
splits as
\begin{equation}\label{H_splits}H=P\oplus V.\end{equation}
Both $P$ and $V$ are null with respect to the Levi form
${\mathcal{L}}:\Wedge^2H\to L$, which otherwise restricts to a non-degenerate
pairing $P\otimes V\to L$. 
\end{thm}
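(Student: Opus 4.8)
The plan is to leverage the explicit description of the Levi form $\mathcal{L}$ as the descent of the canonical symplectic form~(\ref{canonical_symplectic_form}) to the contact distribution~$H$. Having fixed a torsion-free representative of the projective class so as to split $T(T^*M)$, an element of $H$ is recorded by a horizontal part $X^a$ and a vertical part $\omega_a$, subject to $X^ap_a=0$ and taken modulo $\omega_a\sim\omega_a+tp_a$. Under the splitting $H=P\oplus V$ just established, the summand $P$ consists of those classes with vanishing vertical part, and $V$ of those with vanishing horizontal part. The splitting itself having already been shown projectively invariant, the remaining content of the theorem is entirely a computation of $\mathcal{L}$ on the two summands and between them.

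The first step is to show that $P$ and $V$ are null. For two elements of $P$ the vertical parts vanish, so (\ref{canonical_symplectic_form}) returns $X^b\cdot 0-\tilde X^b\cdot 0=0$; symmetrically, for two elements of $V$ the horizontal parts vanish and (\ref{canonical_symplectic_form}) again gives zero. Hence both subbundles are isotropic, and since each is of rank two inside the rank-four symplectic bundle $(H,\mathcal{L})$, each is in fact Lagrangian.

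The second step is the cross-pairing. Evaluating (\ref{canonical_symplectic_form}) on an element of $P$ with horizontal part $X^a$ against an element of $V$ with vertical part $\tilde\omega_b$ yields $X^b\tilde\omega_b$, the tautological contraction between $\pi^*TM$ and $\pi^*T^*M$. I would then verify that this descends to $V$: replacing $\tilde\omega_b$ by $\tilde\omega_b+tp_b$ alters the value by $tX^bp_b$, which vanishes precisely because $X\in P$. Non-degeneracy follows in two directions. If a vector in $P$ pairs trivially with all of $V$, then $X^b\tilde\omega_b=0$ for every covector $\tilde\omega_b$, whence $X=0$; conversely, a covector annihilating all of $P$ must be a multiple of $p_b$ and so represents the zero class in~$V$. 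This exhibits $\mathcal{L}$ as a perfect pairing $P\otimes V\to L$, as required.

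I do not expect a serious obstacle here, since every assertion reduces to the single bilinear formula~(\ref{canonical_symplectic_form}); the one place demanding care is the well-definedness of the cross-pairing on the quotient bundle~$V$, which rests squarely on the defining condition $X^ap_a=0$ for~$P$. It also remains to note that none of this depends on the chosen connection, but that is immediate from the connection-independence of~(\ref{canonical_symplectic_form}) under torsion-free changes recorded at the start of the section.
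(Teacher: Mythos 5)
Your proposal is correct and works in exactly the paper's framework---the connection-induced splitting of $T(T^*M)$ together with the descended symplectic form \eqref{canonical_symplectic_form} as the Levi form---with your explicit nullity and perfect-pairing computations supplying precisely the details the paper leaves implicit, while the projective invariance of $H=P\oplus V$ that you defer is exactly what the paper's pre-theorem discussion (its de facto proof) establishes via \eqref{projective_change_of_splitting}. One caution about your closing remark: the canonicity of the splitting does \emph{not} follow from the torsion-free invariance of \eqref{canonical_symplectic_form}---under a general torsion-free change \eqref{change_of_splitting} the subbundle $P$ genuinely moves, and it is only for changes of the special projective form $\Gamma_{ab}{}^c=\Upsilon_a\delta_b{}^c+\Upsilon_b\delta_a{}^c$ that the correction terms land in the span of $p_b$ and hence die in the quotient~$V$.
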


In general, if $C$ is a manifold with contact distribution $H\subset TC$, then
a splitting $H=P\oplus V$ into null subspaces for the Levi form is
a type of parabolic geometry~\cite{CS} called {\em almost contact Legendrean}.
Projective differential geometry is another type of parabolic geometry and the
construction of this section may be viewed in Dynkin diagram notation 
as $\xox{}{}{}\xrightarrow{\,\pi\,}\xoo{}{}{}$. Furthermore,
\begin{equation}\label{dynkin}TC=L+
\begin{array}{c}P\\[-4pt]
\oplus\\[-1pt]
V\end{array}=
\;\xox{1}{0}{1}+
\begin{array}{c}\xox{1}{1}{-1}\\[-2pt]
\oplus\\[-1pt]
\xox{-1}{1}{1}\end{array}\end{equation}
and the harmonic curvature splits into $3$ pieces
$$H^2({\mathfrak{g}}_{-1},{\mathfrak{sl}}(4,{\mathbb{R}}))
=\begin{array}{l}\xox{-4}{1}{2}
\;\leftrightsquigarrow
\Big\{\!\!\begin{tabular}{l}obstruction to\\
integrability of $P$\end{tabular}\\[-9pt]
\quad\;\oplus\\[-1pt]
\xox{-3}{4}{-3}\\[-2pt]
\quad\;\oplus\\[-6pt]
\xox{2}{1}{-4}\;\leftrightsquigarrow
\Big\{\!\!\begin{tabular}{l}obstruction to\\
integrability of $V$.\end{tabular}
\end{array}$$
Meanwhile, as detailed in~\cite{DE}, the harmonic curvature of $3$-dimensional
projective geometry (usually known as the projective {\em Weyl curvature}) lies
in $\xoo{-4}{1}{2}$. One can easily check that, in case $C$ is constructed from
such a $3$-dimensional projective~$M$, as above, then the harmonic curvature of
$C$ lies only in $\xox{-4}{1}{2}$ and that it is the pull-back of the Weyl 
curvature. In particular, the contact Legendrean structure on $C$ is flat if 
and only if the projective structure on $M$ is flat.

Aerobatics may now be restricted by requiring, not only that the trajectory in
$C$ be everywhere tangent to~$H$, but also that the tangent vector be null with
respect to the neutral signature conformal metric on $H$ given by the
non-degenerate pairing $P\otimes V\to L$. Some special man{\oe}uvres are
permitted. Firstly, there is the option of remaining stationary in $M$ whilst
changing the saucer orientation arbitrarily. In other words, since the fibres
of $\pi:C\to M$ are null, it is permitted to move along them as one wishes. The
second option is to move along a projective geodesic in~$M$, with any initial
orientation, lifting this curve to $C$ in accordance with the projectively
invariant splitting (\ref{H_splits}). It is a common experience in usual
aerobatics, that one carries along one's own frame of reference! Indeed, any
curve starting at $x\in M$ with an initial choice of orientation in
$\pi^{-1}(x)$ can be uniquely lifted into $C$ in accordance
with~(\ref{H_splits}). This may be viewed as the difference between `gliding'
and `powered flight.' In any case, null man{\oe}uvring now has the geometric
interpretation that, when moving in~$M$, `rolls' are restricted to be about
one's axis of flight (the `slow roll' in usual aerobatics). Using only the two
special man{\oe}uvres of stationary rolling and gliding, as above, it is
already clear that a pilot may park her craft in an arbitrary location and
orientation.

\section{A CR structure on $C$}\label{landing}
In the previous section we saw that a projective structure on $M$ is exactly
what is needed to define what might be called `attack mode,' in which a saucer
is permitted only to make rolls about its axis of flight. In coming in to land,
however, this type of man{\oe}uvre is unsuitable, even dangerous! More suitable
for landing is the motion often observed in falling leaves, whereby rolls are
constrained to be about axes orthogonal to the direction of flight. To make
sense to this `landing mode,' one clearly needs a notion of orthogonality in
the disc of the saucer. It is natural to suppose that this notion is induced
from $M$ itself. In other words, we shall suppose that $M$ is endowed with a
{\em conformal metric}.

If two Riemannian metrics $g_{ab}$ and $\widehat{g}_{ab}$ are conformally
related, it is convenient to write $\widehat{g}_{ab}=\Omega^2g_{ab}$ for a 
smooth positive function~$\Omega$. We shall suppose that $M$ is oriented and 
write $\epsilon_{abc}$ for the volume form associated to the metric~$g_{ab}$. 
A conformal change of metric 
$\widehat{g}_{ab}=\Omega^2g_{ab}$
induces a change of volume form 
$\widehat{\epsilon}_{abc}=\Omega^3\epsilon_{abc}$ (we say that $\epsilon_{abc}$
has {\em conformal weight\/}~$3$) and the corresponding 
Levi-Civita connections are related according to
\begin{equation}\label{conformal_change_of_connection}
\widehat\nabla_a\phi_b=\nabla_a\phi_b-\Upsilon_a\phi_b-\Upsilon_b\phi_a
+\Upsilon^c\phi_cg_{ab},\end{equation} 
where $\Upsilon_a=\nabla_a\log\Omega$. We may choose a metric in the conformal
class and use its Levi-Civita connection to write
$$T(T^*M)=
\begin{array}c\nu^*TM\\[-4pt] \oplus\\[-2pt] \nu^*T^*M\end{array}\ni
\left[\!\!\begin{array}{c}X^a\\ \omega_a\end{array}\!\!\right].$$
According to (\ref{change_of_splitting}) and~
(\ref{conformal_change_of_connection}), if $\widehat{g}_{ab}=\Omega^2g_{ab}$,
then
$$\widehat{\left[\!\!\begin{array}{c}X^b\\ \omega_b\end{array}\!\!\right]}
=\left[\!\!\begin{array}{c}X^b\\ 
\omega_b+X^a\Upsilon_ap_b+X^cp_c\Upsilon_b-X_b\Upsilon^cp_c
\end{array}\!\!\right].$$
On $H$, since $\omega_b$ is only defined modulo~$p_b$, and since~$X^cp_c=0$, we
can drop two of these terms to obtain
\begin{equation}\label{conformal_change_of_splitting}
\widehat{\left[\!\!\begin{array}{c}X^b\\ \omega_b\end{array}\!\!\right]}
=\left[\!\!\begin{array}{c}X^b\\ 
\omega_b-X_b\Upsilon^cp_c
\end{array}\!\!\right]\enskip\mbox{on}\enskip
H=\begin{array}{c}P\\[-4pt] \oplus\\[-2pt] V\end{array}\!\!\!
\twoheadleftarrow\mbox{in the presence of~$g_{ab}$}.\end{equation}
Instead of $\omega_b$ up to multiples of~$p_b$, we may
use the conformal metric to suppose that $\omega^ap_a=0$ (i.e., normalise
by $\omega_b\mapsto\omega_b-(\omega^cp_c/p^ap_a)p_b$). The
change in splitting respects this normalisation (since $X^bp_b=0$). So
now we have, for a chosen metric in the conformal class,
$$H=\left\{\left[\!\!\begin{array}{c}X^b\\ \omega^b\end{array}\!\!\right]
\mbox{s.t.\ }X^bp_b=0\enskip\mbox{and}\enskip
\omega^bp_b=0\right\},$$
where $\omega^b$ has conformal weight $-2$ and, if
$\widehat{g}_{ab}=\Omega^2g_{ab}$, then (\ref{conformal_change_of_splitting})
applies. We define $J:H\to H$ by
\begin{equation}\label{J}
\left[\!\!\begin{array}{c}X^b\\ \omega^b\end{array}\!\!\right]\longmapsto
\frac{1}{\sqrt{p^dp_d}}\left[\!\!\begin{array}{c}\epsilon^{abc}X_ap_c\\
\epsilon^{abc}\omega_ap_c\end{array}\!\!\right].\end{equation}
It respects the change (\ref{conformal_change_of_splitting}) and is hence
well-defined. Since 
$$\epsilon^{abc}\epsilon_{ade}
=\delta_d{}^b\delta_e{}^c-\delta_e{}^b\delta_d{}^c$$
it follows that $J^2=-{\mathrm{Id}}$ and we have defined an 
{\em almost CR structure\/}. 
In fact, we may check that this almost CR structure is integrable as follows.
Since $P$ and $V$ are both two-dimensional,
we need only check that, for the 
Nijenhuis tensor $N(\underbar\enskip,\underbar\enskip)$, 
$$N\left(\left[\!\!\begin{array}{c}X^a\\ 0\end{array}\!\!\right],
\left[\!\!\begin{array}{c}0\\ \omega_b\end{array}\!\!\right]\right)=0.$$
This requirement expands to the vanishing of
\begin{equation}\label{what_we_would_like_to_vanish}\left[\!\!\begin{array}{c}
-\omega_b\partial^bX^a\!-\!J(\omega_b\partial^b(JX^a))
\!-\!J((J\omega_b)\partial^bX^a)\!+\!(J\omega_b)\partial^b(JX^a)\\ 
X^a\nabla_a\omega_b\!+\!J(X^a\nabla_a(J\omega_b))
\!+\!J((JX^a)\nabla_a\omega_b)\!-\!(JX^a)\nabla_a(J\omega_b)
\end{array}\!\!\right]
\end{equation}
where $\partial^a=\partial/\partial p_a$ and may be verified as follows. 
Firstly,
$$\partial^b(JX^a)
=\partial^b\Big(\frac1{\sqrt{p^ep_e}}\epsilon^{cad}X_cp_d\Big),$$
which may be expanded by the Leibniz rule as
$$\frac1{\sqrt{p^ep_e}}\epsilon^{cad}(\partial^bX_c)p_d
-\frac{p^b}{(p^ep_e)^{3/2}}\epsilon^{cad}X_cp_d
+\frac1{\sqrt{p^ep_e}}\epsilon^{cab}X_c.$$
Therefore
$$\omega_b\partial^b(JX^a)
=J(\omega_b\partial^bX^a)
+\frac1{\sqrt{p^ep_e}}\epsilon^{abc}\omega_bX_c$$
so
$$J(\omega_b\partial^b(JX^a))=-\omega_b\partial^bX^a
+\frac1{p^ep_e}(g^{ab}g^{cd}-g^{ac}g^{bd})\omega_bX_cp_d,$$
which, bearing in mind that $X^dp_d=0$ and $\omega^dp_d=0$, reduces to
$$J(\omega_b\partial^b(JX^a))=-\omega_b\partial^bX^a.$$
Similarly,
$$(J\omega_b)\partial^b(JX^a)
=J((J\omega_b)\partial^bX^a)$$
and all terms in the first line of (\ref{what_we_would_like_to_vanish}) cancel.
For the second line, it is evident that
$$X^a\nabla_a(J\omega_b)=J(X^a\nabla_a\omega_b)
\quad\mbox{and}\quad
(JX^a)\nabla_a(J\omega_b)=J((JX^a)\nabla_a\omega_b)$$
and, again, all terms cancel.

The `landing mode,' informally described at the beginning of this section is
now formally defined by the restriction $X^bJ\omega_b=0$, noting from
(\ref{conformal_change_of_splitting}) and (\ref{J}) that this constraint is
invariantly defined. Such a man{\oe}uvre is very much at odds with the `attack
mode' of the previous section. A conformal structure on $M$ does not allow slow
rolls to be defined: one sees from (\ref{conformal_change_of_splitting}) that
the restriction $X^b\omega_b=0$ is always ill-defined unless one restricts to
constant rescalings of the metric. In both modes, however, flying is restricted
by requiring that the allowed curves in $C$ are not only tangent to~$H$, but 
also that they be {\em null\/} for an appropriately defined neutral signature 
metric on~$H$ (with values in the line bundle~$L$). These metrics are
\begin{equation}\label{modes}
\left\|\left[\!\!\begin{array}{c}X\\ \omega\end{array}\!\!\right]
\right\|^2\!=\!X\intprod\omega\;\mbox{(attacking)}\qquad
\left\|\left[\!\!\begin{array}{c}X\\ \omega\end{array}\!\!\right]
\right\|^2\!=\!X\intprod J\omega\;\mbox{(landing)}.\end{equation} 
As tensors on $H$, we have the usual compatibility
$J_\alpha{}^\beta=\Omega_{\alpha\gamma}g^{\beta\gamma}$ in which any two of the
Levi form $\Omega_{\alpha\beta}$, the inverse metric $g^{\alpha\beta}$, and the
endomorphism $J_\alpha{}^\beta$ determine the third. In landing mode, we have
$J^2=-{\mathrm{Id}}$. In attacking mode, the endomorphism $J:H\to H$ instead
satisfies $J^2={\mathrm{Id}}$, being the identity on $V$ and minus the identity
on~$P$ (for any given metric in the conformal class on~$M$). For either of
these geometries, stationary rolling is allowed since $V$ is null in either
case. A metric on $M$ induces a splitting
$$0\to V\longrightarrow H\longrightarrow
\raisebox{-10pt}{\begin{picture}(0,0)
\put(-23,2){\LARGE$\smile$}
\put(-23,11){\vector(-1,1){0}}
\end{picture}}P\to 0$$
and, as in Section~\ref{attack}, the `gliding' man{\oe}uvre is now available.
In other words, we may use a horizontal lift to arrive at $\omega=0$ in either
of the neutral signature metrics~(\ref{modes}). Therefore, parking in an 
arbitrary location and orientation is easily achievable in the CR case. The 
only difference is that the splitting, and hence the particular man{\oe}uvring 
to be used, depends on choosing a metric in the conformal class.

The Dynkin diagram notation for this construction is
$$\xox{}{}{}\xrightarrow{\,\pi\,}\btwo{}{}.$$
The harmonic curvature in three-dimensional conformal geometry is the Cotton
tensor in $\btwo{-5}{4}$, which pulls back to $\xox{-3}{4}{-3}$. For more
details concerning this construction, and especially a characterisation of the
$5$-dimensional CR manifolds that arise in this way, see~\cite{LeB1} (and also
\cite{LeB2} for some very interesting consequences in the real-analytic
setting). The flat model of this construction is when 
$M=S^3\hookrightarrow S^4$ is the standard inclusion of round spheres and
$$\begin{array}{ccccl}
C&\hookrightarrow&{\mathbb{CP}}_3\\
\pi\!\!\downarrow&&\tau\!\!\downarrow\\
S^3&\hookrightarrow&S^4\end{array}$$
where $\tau:{\mathbb{CP}}_3\to S^4$ is the twistor fibration and
$C\hookrightarrow{\mathbb{CP}}_3$ is the Levi indefinite hyperquadric.

Finally, we remark that if we would like to have at our disposal both
the `attack mode' of Section~\ref{attack} and the `landing mode' of the current
section, then we require compatible projective and conformal structures on~$M$.
If $\nabla_a$ represents a projective structure and $g_{ab}$ is a metric, then
one can check that the $1$-form
$$\omega_a\equiv 4g^{bc}\nabla_bg_{ac}-\epsilon^{bcd}\nabla_a\epsilon_{bcd},$$
where $\epsilon_{abc}$ is the volume form of~$g_{ab}$, is projectively
invariant. To find a metric in the conformal class of $g_{ab}$ whose
Levi-Civita connection is in the given projective class, it is firstly
necessary that $\omega_a$ be exact. In this case, a further necessary and
sufficient condition is that
$$40\nabla_ag_{bc}+2\omega_ag_{bc}-3\omega_bg_{ac}
-3\omega_cg_{ab}=0,$$
where $\nabla_a$ has been chosen from the projective class so that
$\nabla_a\epsilon_{bcd}=0$ (cf.~\cite{MT}). In this case, we are obliged to
have a Riemannian metric on $M$ defined up to homothety, i.e.~only constant
rescalings are allowed. More severely, as discussed in~\cite{N}, the generic
projective structure does not arise from a metric at all.

In summary, for a Riemannian metric on $M$ there are two possible flying modes,
namely the `attacking mode' of \S\ref{attack}, which sees only the induced
projective structure on~$M$, and the `landing mode' of \S\ref{landing}, which
sees only the induced conformal structure on~$M$. In this case, on the
configuration space $C\to M$, there are {\em two different\/} neutral signature
conformal metrics on the contact distribution $H\subset TC$. The vertical
bundle $V\hookrightarrow H$ is null for either of these conformal metrics
on~$H$, as is the horizontal bundle $P\hookrightarrow H$ (defined only by the
projective structure on~$M$). So, when $M$ is Riemannian, a flying saucer may be
fitted with a switch that enables its pilot to toggle between attacking and
landing.

\section{A $G_2$ contact structure on $C$}\label{G2}
Recall that the contact distribution $H\subset TC$ on a five-dimensional
contact manifold $C$ is endowed with its Levi form $\Wedge^2H\to L\equiv TC/H$,
a non-degenerate symplectic form defined up to scale, thereby reducing the
structure group of frames for $H$ from ${\mathrm{GL}}(4,{\mathbb{R}})$ to the
conformal symplectic group ${\mathrm{CSp}}(4,{\mathbb{R}})$. A {\em $G_2$
contact structure\/} on $C$ is a further reduction of structure group to
${\mathrm{GL}}(2,{\mathbb{R}})\subset{\mathrm{CSp}}(4,{\mathbb{R}})$, realised
by the representation of ${\mathrm{GL}}(2,{\mathbb{R}})$ on the third symmetric
power $\bigodot^3\!{\mathbb{R}}^2$ of the standard representation of
${\mathrm{GL}}(2,{\mathbb{R}})$ on ${\mathbb{R}}^2$. As
$\bigodot^3\!{\mathbb{R}}^2$ is $4$-dimensional, we have
${\mathrm{GL}}(2,{\mathbb{R}})\hookrightarrow{\mathrm{GL}}(4,{\mathbb{R}})$
and, since
\begin{equation}\label{Wedge2H}\textstyle\Wedge^2\bigodot^3\!{\mathbb{R}}^2
=(\bigodot^4\!{\mathbb{R}}^2\otimes\Wedge^2{\mathbb{R}}^2)\oplus
(\Wedge^2{\mathbb{R}}^2\otimes\Wedge^2{\mathbb{R}}^2
\otimes\Wedge^2{\mathbb{R}}^2),\end{equation}
the second summand of which is $1$-dimensional, this homomorphism maps to
${\mathrm{CSp}}(4,{\mathbb{R}})\subset{\mathrm{GL}}(4,{\mathbb{R}})$. (See
\cite{B} for a discussion of similar reductions for the frame bundle of a
four-dimensional manifold.)

Equivalently, a $G_2$ structure on a five-dimensional contact manifold
$C$ is a rank two vector bundle $S\to C$ together with an identification
of vector bundles, compatible with the Levi form,
\begin{equation}\label{H_is_sym_3}\textstyle\bigodot^3\!S=H,\end{equation}
where $H$ is the contact distribution and $\bigodot^3\!S$ is the third
symmetric power of~$S$. It is named for a `flat model' in parabolic geometry,
namely the homogeneous space \raisebox{1pt}{$\gtwo{}{}$} for the split real
form of the Lie group~$G_2$. The tangent bundle of this homogeneous space is an
extension
\begin{equation}\label{extension}T(\raisebox{1.5pt}{$\gtwo{}{}$})=
\gtwo{0}{1}\enskip+\enskip\gtwo{3}{-1},\end{equation}
and $H$ has the required form for $S=\gtwo{1}{\enskip-1/3}$.\quad For this flat
model, the local infinitesimal symmetries are isomorphic to the split real 
form of the exceptional Lie algebra~$G_2$.

Yet a third interpretation of a $G_2$ contact structure is as a field of
twisted cubic cones inside~$H$ (akin to the interpretation of a Lorentzian
conformal structure as a field of quadratic cones, i.e.~the null vectors).
Specifically, writing $H$ in the form (\ref{H_is_sym_3}) defines a cone
$$\textstyle s\odot s\odot s\in\bigodot^3\!S=H$$
of simple vectors in each fibre of~$H$. It is easy to check that this cone
determines the structure. Geometrically, it can be regarded as a smoothly
varying family 
of twisted cubics in the bundle ${\mathbb{P}}(H)$ of three-dimensional
projective spaces (again, see~\cite{B} for the corresponding geometry in four
dimensions).

In any case, our aim in this section is to equip the configuration space of a
flying saucer with a $G_2$ contact structure.

In Sections~\ref{attack} and~\ref{landing}, the structures on the configuration
space $C$ were nicely determined by suitable differential geometric structures
on~$M$, specifically {\em projective\/} in Section~\ref{attack} and {\em
conformal\/} in Section~\ref{landing}. Now we shall firstly suppose that $M$
has a projective structure and also, for the moment, a fixed volume form
$\epsilon_{bcd}$. In this case, we can specify a unique connection from the
projective class by insisting that $\nabla_a\epsilon_{bcd}=0$. In any case, as
in Section~\ref{attack}, we have a well-defined splitting $H=P\oplus V$.

To complete the geometric structure on $M$, we suppose that we are given two
linearly independent $1$-forms, say $\phi$ and $\psi$. These forms are
sufficient to define a $G_2$ contact structure on (an open subset of)
$C$ as follows.

For $P\hookrightarrow C$, define a frame $e^1,e^2\in\Gamma(P)$ by requiring
that
$$\begin{array}{cc}
e^1\intprod\pi^*\phi=1&\enskip e^1\intprod\pi^*\psi=0\\[4pt]
e^2\intprod\pi^*\phi=0&\enskip e^2\intprod\pi^*\psi=1.
\end{array}$$
This is legitimate wherever $\pi^*\phi$ and $\pi^*\psi$ are linearly 
independent when restricted to $H\subset TC$ (and defines an open subset 
of~$C$). Next, recall the canonical short exact sequence
$$0\to P\to\pi^*TM\to L\to 0$$
on $C$ and hence a canonical identification
$$L^*=\pi^*\Wedge_M^3\otimes\Wedge^2P.$$
Thus, we may use the nowhere vanishing sections
$\pi^*\epsilon_{bcd}\in\Gamma(\pi^*\Wedge_M^3)$ and $e^1\wedge
e^2\in\Gamma(\Wedge^2P)$ to trivialise $L^*$. With this trivialisation, the
Levi form $P\otimes V\to L$ identifies $V=P^*$ and we take
$e_1,e_2\in\Gamma(V)$ to be the dual frame to $e^1,e^2\in\Gamma(P)$. Finally,
we define a twisted cubic 
${\mathbb{RP}}_1\hookrightarrow{\mathbb{P}}(P\oplus V)={\mathbb{P}}(H)$ by
\begin{equation}\label{twisted_cubic}{\mathbb{RP}}_1\ni[s,t]\longmapsto
[s^3\,e^1+s^2t\,e^2+t^3\,e_1-3st^2\,e_2]
\in{\mathbb{P}}(H).\end{equation}
The seemingly peculiar choice of constants here is to ensure 
that this cubic induce the existing Levi form on~$H$, specifically that
$$\begin{array}{l}(s\tilde{t}-t\tilde{s})^3\\
=(s^3\,e^1+s^2t\,e^2)\intprod(\tilde{t^3}\,e_1-3\tilde{s}\tilde{t}^2\,e_2)
-(\tilde{s}^3\,e^1+\tilde{s}^2\tilde{t}\,e^2)\intprod(t^3\,e_1-3st^2\,e_2)
\end{array}$$
in accordance with~(\ref{canonical_symplectic_form}).

There is some `gauge freedom' associated with this arrangement,
i.e.~changes in the data $(\epsilon,\phi,\psi)$ on $M$ that do not affect
the associated $G_2$ projective structure on~$C$. Specifically, if we replace
$(\epsilon_{bcd},\phi_b,\psi_b)$ by
\begin{equation}\label{freedom}\hat\epsilon_{bcd}=\Omega^4\epsilon_{bcd}\qquad
\hat\phi_b=h^3\phi_b\qquad\hat\psi_b=\Omega\psi_b,\end{equation}
for arbitrary smooth nowhere vanishing functions $\Omega$ and $h$ on $M$, then
$$\hat e^1=h^{-3}e^1\qquad\hat e^2=\Omega^{-1}e^2\qquad
\hat e_1=h^6\Omega^{-3}e_1\qquad\hat e_2=h^3\Omega^{-2}e_2,$$
giving rise to the twisted cubic
$$[\hat s,\hat t]\longmapsto
[h^{-3}\hat s^3\,e^1+\Omega^{-1}\hat s^2\hat t\,e^2
+h^6\Omega^{-3}\hat t^3\,e_1-3h^3\Omega^{-2}\hat s\hat t^2\,e_2],$$
which is simply a reparameterisation, namely 
$[\hat s,\hat t]=[hs,h^{-2}\Omega t]$, of the original
cubic~(\ref{twisted_cubic}). Using the language of {\em projective weights\/},
e.g.~as in~\cite{DE}, we may regard $\epsilon_{bcd}$ as the tautologically
defined section of $\Wedge^3(4)$ and the true data as a
$1$-form~$\phi\in\Gamma(\Wedge^1)$, defined only up to scale, and
$\psi\in\Gamma(\Wedge^1(1))$, a $1$-form of projective weight~$1$.

The construction above is almost captured geometrically as follows. Firstly,
the projective structure on $M$ splits the contact distribution as 
$H=P\oplus V$, which, in the bundle ${\mathbb{P}}(H)\to M$ of 
$3$-dimensional projective spaces, may be viewed as a
family of skew lines
$$\begin{picture}(100,100)
\put(110,90){\makebox(0,0){${\mathbb{P}}(H_x)$}}
\put(-29,90){\makebox(0,0){${\mathbb{P}}(V_x)$}}
\put(-29,10){\makebox(0,0){${\mathbb{P}}(P_x)$}}
\put(-3,98.5){\line(-2,-1){10}}
\put(-3,1.5){\line(-2,1){10}}
\thicklines
\put(0,100){\line(1,-1){100}}
\put(0,0){\line(4,1){72}}
\put(88,22){\line(4,1){75}}
\put(0,100){\makebox(0,0){\Large$\circ$}}
\put(0,0){\makebox(0,0){\Large$\circ$}}
\end{picture}$$
and the family of twisted cubics~(\ref{twisted_cubic}) looks like this:
$$\begin{picture}(100,110)(0,-10)\thicklines
\put(0,100){\line(1,-1){100}}
\put(0,0){\line(4,1){72}}
\put(88,22){\line(4,1){75}}
\put(20,80){\makebox(0,0){$\bullet$}}
\put(24,6){\makebox(0,0){$\bullet$}}
\qbezier(20,80)(80,20)(24,6)
\qbezier(20,80)(0,100)(-20,50)
\qbezier(-20,50)(-40,0)(0,20)
\qbezier(0,20)(40,40)(0,40)
\qbezier(0,40)(-20,40)(-20,40)
\qbezier(-28,40)(-50,40)(-55,20)
\qbezier(-55,20)(-60,0)(-20,0)
\qbezier(-20,0)(0,0)(24,6)
\put(32,87){\makebox(0,0){$[e_1]$}}
\put(35,-3){\makebox(0,0){$[e^1]$}}
\end{picture}$$
intersecting ${\mathbb{P}}(P)$ and ${\mathbb{P}}(V)$ tangentially at $[e^1]$
and $[e_1]$, respectively, as can be seen from~(\ref{twisted_cubic}). Requiring
that the twisted cubic be compatible with the Levi form, as we do, is 
insufficient to fix it. The gauge freedom explained above says that there is 
just one more scalar-valued piece of information required at each point 
$x\in M$ and this may be interpreted as our requiring
$\psi\in\Gamma(\Wedge^1(1))$.

As is detailed in~\cite{EN}, there are two examples of this construction for
which $C$ turns out to be (locally) the flat model \raisebox{1pt}{$\gtwo{}{}$}
for the split real form of the Lie group~$G_2$. For both, we start with the
standard flat projective structure on ${\mathbb{R}}^3$ with usual 
co\"ordinates $(x,y,z)$ and take
$$\epsilon_{abc}=dx\wedge dy\wedge dz,$$
the standard volume form. 

\medskip{\bf First example.}\quad We take
$$\phi=dx\quad\mbox{and}\quad\psi=dy.$$
With slightly different co\"ordinates, this example is due to Engel~\cite{E}. 
In~\cite{EN} we write down all its $14$-dimensional symmetries.

\medskip{\bf Second example.}\quad We take
$$\phi=x\,dy-y\,dx\quad\mbox{and}\quad\psi=y^{-1}dy.$$
The geometry on $M$ appears to be different from the first example but we shall
see in Theorem~\ref{G2_flat_thm} below that the induced $G_2$ contact structure
on $C$ is $G_2$-flat and therefore isomorphic.

\section{The geometry on $M$}\label{geometry_on_M}
In this section we speculate on the geometry on $M$ that is needed to generate
the $G_2$ contact structure on $C\to M$, as in~\S\ref{G2}. We have already seen
in (\ref{freedom}) that $\psi$ should have projective weight~$1$ whilst $\phi$
may be arbitrarily rescaled. In fact, we shall suppose that $\phi$ has
projective weight~$2$. There are several reasons for this, the most na\"{\i}ve
of which is as follows. For a $1$-form of projective weight $w$, the formula
(\ref{projective_change}) for projective change becomes
\begin{equation}\label{projective_change_with_weight}\widehat\nabla_a\phi_b
=\nabla_a\phi_b+(w-1)\Upsilon_a\phi_b-\Upsilon_b\phi_a\end{equation}
and when $w=2$, we see that $\nabla_{(a}\phi_{b)}$ is invariant, where the
round brackets mean to take the symmetric part. We may, therefore, assume that
$\nabla_{(a}\phi_{b)}=0$ as a sort of {\em compatibility\/} between $\phi_b$
and the projective structure defined by~$\nabla_a$. We shall come back to this
shortly but an immediate and congenial consequence of imposing
$\nabla_{(a}\phi_{b)}=0$ is that $\phi_b$ is then determined up to an overall 
constant:
$$\nabla_{(a}(f\phi_{b)})=f\nabla_{(a}\phi_{b)}+(\nabla_{(a}f)\phi_{b)}
\enskip\Rightarrow\enskip\nabla_af=0.$$

In summary, the data we are supposing on $M$ is as follows.
\begin{equation}\label{data}\begin{array}{ll}
\bullet&\mbox{A projective structure, determined by~$\nabla_a$},\\
\bullet&\mbox{$\phi_b\in\Gamma(M,\Wedge^1(2))$, such that 
$\nabla_{(a}\phi_{b)}=0$},\\
\bullet&\mbox{$\psi_b\in\Gamma(M,\Wedge^1(1))$},
\end{array}\end{equation}
and we note that the two examples from \S\ref{G2} above satisfy
$\nabla_{(a}\phi_{b)}=0$, as requested. There are various projective invariants
that we may generate from this data. The {\em concircularity\/} operator
$$\textstyle\theta^b\mapsto(\nabla_a\theta^b)_\circ
\equiv\nabla_a\theta^b-\frac13\delta_a{}^b\nabla_c\theta^c$$
(where $\circ$ means to take the trace-free part) is projectively invariant if
$\theta^a$ has projective weight~$-1$. Meanwhile, the tautological form
$\epsilon^{bcd}$ has projective weight~$-4$ so $\epsilon^{bcd}\phi_c\psi_d$ has
projective weight~$-1$ and hence
$(\nabla_a(\epsilon^{bcd}\phi_c\psi_d))_\circ$ is invariant. It may be 
rewritten as
$$\nabla_a(\phi_{[b}\psi_{c]})-\nabla_{[a}(\phi_b\psi_{c]}).$$
It is easily verified that this expression vanishes for the two examples given
at the end of the previous section. When $w=1$ the projective change
(\ref{projective_change_with_weight}) reads
$$\widehat\nabla_b\psi_c=\nabla_b\psi_c-\Upsilon_c\psi_b\qquad\mbox{whence}
\qquad\psi_{[a}\nabla_{b]}\psi_c\enskip\mbox{is invariant}$$
and also vanishes for our two examples.

\begin{thm}\label{G2_flat_thm} For the data \eqref{data} to define a
flat $G_2$ contact structure on~$C$, it is necessary and sufficient 
that
\begin{equation}\label{G2_flat_conditions}
\nabla_a\mbox{ be projectively flat},\enskip 
\nabla_a(\phi_{[b}\psi_{c]})=\nabla_{[a}(\phi_b\psi_{c]}),\enskip
\psi_{[a}\nabla_{b]}\psi_c=0.\end{equation}
\end{thm}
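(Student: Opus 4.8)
The plan is to reduce flatness of the resulting $G_2$ contact structure to the vanishing of its harmonic curvature --- legitimate by the general theory of parabolic geometries \cite{CS}, once the canonical regular normal Cartan connection is in place --- and then to compute that harmonic curvature explicitly in terms of the data \eqref{data}. I would work on $C$ with the adapted frame $e^1,e^2\in\Gamma(P)$, $e_1,e_2\in\Gamma(V)$ and the twisted cubic \eqref{twisted_cubic} constructed in Section~\ref{G2}. By construction this frame transforms, up to the reparametrisation recorded in \eqref{freedom}, only through the gauge freedom in $(\epsilon,\phi,\psi)$, so any curvature expressed through it is automatically built from projective invariants of the data. Note that the $G_2$ structure alone does \emph{not} remember the Legendrean splitting $H=P\oplus V$, since $\bigodot^3\mathbb{R}^2$ is an irreducible $\mathrm{GL}(2,\mathbb{R})$-module; hence necessity of projective flatness cannot be read off from the almost contact Legendrean geometry of Section~\ref{attack} and must come out of the computation itself.

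The core is to write down the structure equations of this coframe. Differentiating $e^1,e^2,e_1,e_2$ introduces exactly two sources of curvature: the projective curvature of $\nabla_a$ on $M$, and the first covariant derivatives $\nabla_a\phi_b$ and $\nabla_a\psi_b$ of the two forms. Since $\phi_b$ carries projective weight two and we have imposed $\nabla_{(a}\phi_{b)}=0$, only the skew derivative $\nabla_{[a}\phi_{b]}$ survives, and a weight count then controls which components remain. I expect the harmonic curvature to assemble into three projectively invariant blocks: the projective Weyl curvature of $M$, the concircularity invariant $\nabla_a(\phi_{[b}\psi_{c]})-\nabla_{[a}(\phi_b\psi_{c]})$ formed from the weight $-1$ field $\epsilon^{bcd}\phi_c\psi_d$, and the invariant $\psi_{[a}\nabla_{b]}\psi_c$ built from $\psi$ alone --- precisely the three tensors whose vanishing is asserted in \eqref{G2_flat_conditions}. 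Establishing this identification in both directions would prove the theorem at one stroke, recalling that in dimension three the projective Weyl curvature is the complete obstruction to projective flatness.

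For an independent verification of sufficiency, and to sidestep the heaviest part of the structure-equation calculation, I would reduce the data to normal form under diffeomorphism, projective equivalence, and the gauge freedom \eqref{freedom}. The first condition supplies flat projective coordinates $(x,y,z)$ in which $\nabla_a$ is standard. The third condition, $\psi_{[a}\nabla_{b]}\psi_c=0$, is a Frobenius integrability statement forcing the kernel of $\psi$ to be integrable, so after a coordinate change $\psi$ becomes a multiple of an exact form, matching the shape $\psi=y^{-1}\,dy$ once its weight is accounted for. The constraint $\nabla_{(a}\phi_{b)}=0$ already rigidifies $\phi$ up to constant rescaling, and the second condition then fixes $\phi$ relative to $\psi$. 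One is thereby reduced to the first example $\phi=dx$, $\psi=dy$ of Section~\ref{G2}, whose $G_2$ contact structure is the flat model by \cite{EN}; this route simultaneously confirms the assertion there that the second example is $G_2$-flat, and hence isomorphic to the first.

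The main obstacle will be the completeness of the curvature computation: verifying that the structure equations produce no curvature beyond these three tensors, and that the two differential invariants appear as genuinely independent components of the $G_2$ harmonic curvature rather than in some proper combination of one another. Correctly tracking the projective weights and the $\mathrm{GL}(2,\mathbb{R})$ frame changes \eqref{freedom} throughout the calculation on the five-manifold $C$ is the principal bookkeeping difficulty; the normal-form reduction, by contrast, is routine once all three conditions and the full gauge freedom are brought to bear.
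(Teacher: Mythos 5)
Your only worked-out argument is the normal-form reduction, and it fails at its crucial step: the conditions \eqref{G2_flat_conditions} do \emph{not} determine the data \eqref{data} up to projective equivalence and the gauge freedom \eqref{freedom}, so you cannot reduce to the first example. The paper's own closing remark (immediately after its proof) exhibits the obstruction: taking $\phi=dx$ and $\psi=\xi(x,y)\,dx+dy$ with $\xi$ defined implicitly by $F(\xi)=x\xi+y$ for an arbitrary smooth $F$, all three conditions hold, yet the leaves of $\ker\psi$ are the planes $cx+y=F(c)$, which for generic $F$ (say $F(c)=c^2$) share no common line; projective transformations take pencils of planes to pencils of planes, so such data are projectively inequivalent to $(\phi=dx,\,\psi=dy)$, whose leaves $\{y=\mathrm{const}\}$ do form a pencil. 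The misstep originates in reading the third condition as Frobenius integrability of $\ker\psi$: the condition $\psi_{[a}\nabla_{b]}\psi_c=0$ is strictly stronger, saying $\nabla_b\psi_c=\psi_b\alpha_c$ for some $\alpha_c$, which in the normal form $\psi=\xi\,dx+dy$ means $\xi_z=0$ \emph{and} $\xi_x=\xi\xi_y$ --- but even this stronger statement leaves the functional freedom $F$ above, which no projective coordinate change plus rescaling can remove. Hence ``reduce to Engel's example and cite \cite{EN}'' cannot prove sufficiency.

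What the paper does instead is stop the reduction where it genuinely stops --- at $\phi=dx$, $\psi=\xi(x,y)\,dx+dy$ with the equation $\xi\xi_y=\xi_x$ still unresolved --- and then \emph{compute}: with the adapted coframe $\omega^0=dz-a\,dx-b\,dy$, $\omega^1=dx$, $\omega^2=\xi\,dx+dy$, $\omega^3=-\frac13\,db$, $\omega^4=da-\xi\,db$ satisfying \eqref{fully_adapted}, the coefficients \eqref{psi} of the harmonic binary septic all vanish except $\psi_6=6(\xi\xi_y-\xi_x)$, and flatness is read off from Theorem~\ref{harmonic_curvature}; no appeal to known examples is made, nor could it be. Your first, ``core'' route (expressing the harmonic septic for general data in terms of the projective Weyl curvature, the concircularity invariant, and $\psi_{[a}\nabla_{b]}\psi_c$) would, if executed, prove both directions at once, and is in fact the only place in your proposal where the necessity half of the theorem could come from --- your normal-form route does not address necessity at all. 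But you leave exactly its hard part, the structure-equation computation on $C$ and the verification that nothing else appears, as an acknowledged obstacle with no argument supplied. As it stands, one of your two routes is unexecuted and the other is broken, so the proposal does not constitute a proof.
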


The proof will be given shortly but, firstly, some discussion. This theorem
leads us to the following.

\medskip{\bf Third example.}\quad We take the standard flat projective 
structure on ${\mathbb{R}}^3$ with usual co\"ordinates $(x,y,z)$ and
$$\phi=x\,dy-y\,dx\quad\mbox{and}\quad\psi=zy^{-1}dy-dz.$$
It is easily verified that all conditions
(\ref{G2_flat_conditions}) hold and the corresponding $G_2$
contact structure on the configuration space~$C$ is, therefore, flat.

\medskip{\bf Remarks on these three examples.}\quad Although these examples
seem na\"{i}vely to be distinct (and from the Riemannian viewpoint, this is
true), they are, in fact, projectively equivalent. Specifically, if we set
$$\hat x=-x/y\qquad\hat y=-1/y\qquad\hat z=-z/y$$
then
$$d\hat x=y^{-2}(x\,dy-y\,dx)\quad\mbox{and}\quad d\hat y=y^{-2}dy$$
and, taking into account that $\phi$ should have projective weight $2$ and
$\psi$ should have projective weight $1$, it follows that the first example,
$(\phi=d\hat x, \psi=d\hat y)$ is converted into 
$(\phi=x\,dy-y\,dx,\psi=y^{-1}\,dy)$, which is the second example. Similarly, 
the projective change
$$\hat x=-x/y\qquad\hat y=-z/y\qquad\hat z=1/y$$
converts $(\phi=d\hat x, \psi=d\hat y)$ into
$(\phi=x\,dy-y\,dx,\psi=zy^{-1}\,dy-dz)$, which is the third example.

\smallskip To some extent, this projective equivalence of our three examples
justifies our request that the $1$-form $\phi$ should have projective weight
$2$ but there is another good reason for this, as follows. Recall,
in~(\ref{freedom}), that $\phi$ may be arbitrarily rescaled without effecting
the resulting $G_2$ contact geometry on~$C$. Its kernel $D\subset TM$ is
therefore canonically defined and we may write
\begin{equation}\label{SES}
0\to D\to TM\xrightarrow{\,\phi\,}\xi\to 0,\end{equation} a short exact
sequence tautologically defining a line bundle $\xi$ on~$M$. Let us temporarily
suppose that $D$ is a contact distribution (even though this is false in our
three examples). In three dimensions, the Levi form then provides a
canonical isomorphism $\Wedge^2D=\xi$ and, feeding this back into~(\ref{SES}),
we may identify $\Wedge^3TM=\xi^2$. That $M$ is oriented allows us to identify
$\xi$ as the bundle of densities of projective weight~$2$ (whether or not $M$
has a projective structure). In summary, when $D$ is a contact distribution we
are forced to regard $\phi\in\Gamma(M,\Wedge^0(2))$ and, even when $D$ is
integrable, we may {\em choose\/} to do this. As already observed, in the
presence of a projective structure~$[\nabla_a]$, we may also insist that
$\nabla_{(a}\phi_{b)}=0$. In case $D$ is a contact distribution, this is
exactly the compatibility required between $D$ and a projective structure in
order that the pair $([\nabla_a],D)$ define a {\em contact projective
structure\/} in the sense of Harrison~\cite{H} and/or Fox~\cite{F}. It is a
type of parabolic geometry, the flat model of which is ${\mathbb{RP}}_3$ under
the action of ${\mathrm{Sp}}(4,{\mathbb{R}})$. 

It would be nice to construct a flat $G_2$ contact structure starting with this
flat contact projective structure. Unfortunately, this seems to be impossible.
Specifically, in standard co\"ordinates $(x,y,z)$ on~${\mathbb{R}}^3$, we
may arrange that
$$\phi=x\,dy-y\,dx+dz.$$
On the other hand, the general solution of the concircularity equation 
$(\nabla_a\theta^b)_\circ=0$ is
$$\theta
=a\frac\partial{\partial x}+b\frac\partial{\partial y}
+c\frac\partial{\partial z}
+e\Big(x\frac\partial{\partial x}+y\frac\partial{\partial y}
+z\frac\partial{\partial z}\Big),$$
in which case
$$\theta^b\phi_b=c+bx-ay+ez.$$
Therefore, we cannot find $\psi_d\not=0$ such that
$(\nabla_a(\epsilon^{bcd}\phi_c\psi_d))_\circ=0$, which is the second condition
from (\ref{G2_flat_conditions}). As we shall see in the proof of
Theorem~\ref{construct} below, the quantity $\epsilon^{bcd}\phi_c\psi_d$
naturally arises in constructing the $G_2$ contact geometry on $C$ and, even if
Theorem~\ref{G2_flat_thm} is too restrictive, one would expect the projective
invariant $(\nabla_a(\epsilon^{bcd}\phi_c\psi_d))_\circ$ to be part of the
harmonic curvature of the $G_2$ contact geometry on $C$ (which, in general, is
a binary septic: in fact, a section of $\gtwo{7}{-4}$\,).

Taking $\phi\in\Gamma(M,\Wedge^1(2))$ and $\psi\in\Gamma(M,\Wedge^1(1))$,
allows us to write the twisted cubic cone (\ref{twisted_cubic}) inside the
contact distribution $H\subset TC$ more invariantly than was done
in~\S\ref{G2}. We obtain the following. 

\begin{thm}\label{construct}
Suppose $M$ is a three-dimensional smooth manifold and write
$C\xrightarrow{\,\pi\,}M$ for the configuration space of flying saucers in~$M$.
Given data in the form~\eqref{data} on $M$, we may canonically construct a
$G_2$ contact structure on a suitable open subset of $C$ so that the contact
distribution $H\subset TM$ is written as
$$\textstyle H=\bigodot^3\!S,\quad\mbox{where }
S=\pi^*\Wedge^0(2/3)\oplus\pi^*\Wedge^0(-1/3).$$
\end{thm}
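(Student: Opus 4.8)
My plan is to take the explicit recipe of \S\ref{G2}---the frame $e^1,e^2$ of $P$, its Levi-dual frame $e_1,e_2$ of $V$, and the twisted cubic \eqref{twisted_cubic}---and simply carry projective weights through it, now feeding in the weighted data \eqref{data} in place of a bare volume form and two bare $1$-forms. The tautological $\epsilon_{bcd}\in\Gamma(\pi^*\Wedge^3(4))$ is canonical, so it is not genuine data but merely supplies, at no cost, the identification $\pi^*\Wedge^3_M\cong\pi^*\Wedge^0(-4)$ used to trivialise $L^*=\pi^*\Wedge^3_M\otimes\Wedge^2P$; the true inputs are $\phi\in\Gamma(\Wedge^1(2))$ and $\psi\in\Gamma(\Wedge^1(1))$. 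The construction lives on the open subset of $C$ where $\pi^*\phi,\pi^*\psi$ stay independent on $H$, exactly as before, and by the equivalent descriptions of a $G_2$ contact structure in \S\ref{G2} producing such a cone is the same as producing a rank two $S$ with $\bigodot^3S=H$. Its Levi-compatibility (the displayed identity preceding \eqref{canonical_symplectic_form}) and its invariance under the residual gauge \eqref{freedom} are inherited verbatim from \S\ref{G2}, so the only new content is the weight bookkeeping that pins down~$S$.

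The heart of the proof is thus to compute the isomorphism class of each of the four graded lines of $H=P\oplus V$ cut out by the frame. From $e^1\intprod\pi^*\phi=1$ with $\phi$ of weight $2$ one reads off $\langle e^1\rangle\cong\pi^*\Wedge^0(2)$, and likewise $\langle e^2\rangle\cong\pi^*\Wedge^0(1)$ from $\psi$ of weight~$1$. Hence $\Wedge^2P\cong\pi^*\Wedge^0(3)$, and substituting this together with $\pi^*\Wedge^3_M\cong\pi^*\Wedge^0(-4)$ into $L^*=\pi^*\Wedge^3_M\otimes\Wedge^2P$ gives $L\cong\pi^*\Wedge^0(1)$. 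Finally the defining relations $\mathcal{L}(e^i,e_j)=\delta^i_j$, with $\mathcal{L}$ now valued in the weight-one bundle $L$, force $\langle e_j\rangle\cong L\otimes\langle e^j\rangle^*$, so that $\langle e_1\rangle\cong\pi^*\Wedge^0(-1)$ and $\langle e_2\rangle\cong\pi^*\Wedge^0(0)$. (As a cross-check, the bottom line $\langle e_1\rangle$ is exactly the weight $-1$ vector $\epsilon^{bcd}\phi_c\psi_d$ singled out before the theorem.)

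With the four weights $2,1,0,-1$ in hand I would read off $S$ from the Veronese form of \eqref{twisted_cubic}. Writing the cubic as $[s\sigma+t\tau]\mapsto[(s\sigma+t\tau)^{\odot 3}]$, the pure powers give $\sigma^{\odot 3}=e^1$ and $\tau^{\odot 3}=e_1$, so $S_+^{\,3}=\langle e^1\rangle\cong\pi^*\Wedge^0(2)$ and $S_-^{\,3}=\langle e_1\rangle\cong\pi^*\Wedge^0(-1)$. Extracting cube roots of these line bundles yields $S_+\cong\pi^*\Wedge^0(2/3)$ and $S_-\cong\pi^*\Wedge^0(-1/3)$; the two middle lines $\langle e^2\rangle\cong\pi^*\Wedge^0(1)$ and $\langle e_2\rangle\cong\pi^*\Wedge^0(0)$ are then automatically $S_+^2S_-$ and $S_+S_-^2$, in accordance with \eqref{twisted_cubic}. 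Therefore $S=\pi^*\Wedge^0(2/3)\oplus\pi^*\Wedge^0(-1/3)$ with $\bigodot^3S=H$, as claimed.

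I expect the one real pitfall to be sign errors in this weight bookkeeping---in particular getting the direction of the Levi-raising right, so that $\langle e_1\rangle$ comes out as $\pi^*\Wedge^0(-1)$ and not $\pi^*\Wedge^0(+1)$. The clean internal check is the determinant: $\det H=L^2\cong\pi^*\Wedge^0(2)$ must equal $(\det S)^6$, forcing $\det S\cong\pi^*\Wedge^0(1/3)$, i.e.\ the two weights of $S$ sum to $\tfrac13$. This is met by $\tfrac23$ and $-\tfrac13$ but fails for the opposite signs, which both confirms the computation and explains why the earlier insistence that $\phi$ carry weight $2$ is exactly what makes the fractional weights close up.
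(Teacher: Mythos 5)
Your proposal is correct and is essentially the paper's own argument: the paper performs exactly this weight bookkeeping, working on the open set where $\Theta\equiv\pi^!\phi\wedge\pi^!\psi\neq0$ and constructing the weighted frame sections $E^1=-\Theta^{-1}\pi^!\psi\in\Gamma(P\otimes\pi^*\Wedge^0(-2))$, $E^2=\Theta^{-1}\pi^!\phi\in\Gamma(P\otimes\pi^*\Wedge^0(-1))$, $E_1=\Theta\,\pi^!\phi\in\Gamma(V\otimes\pi^*\Wedge^0(1))$, $E_2=\Theta\,\pi^!\psi\in\Gamma(V)$ --- precisely your four line weights $2,1,-1,0$ --- and then writing the cubic $(\sigma,\tau)\mapsto\sigma^3E^1+\sigma^2\tau\,E^2+\tau^3E_1-3\sigma\tau^2E_2$ invariantly, which yields $S=\pi^*\Wedge^0(2/3)\oplus\pi^*\Wedge^0(-1/3)$ just as in your cube-root computation. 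One inessential slip in your parenthetical cross-check: in the paper's proof the quantity $\epsilon^{bcd}\phi_c\psi_d$ arises not as the line $\langle e_1\rangle$ (which is spanned by $E_1$, i.e.\ by the image of $\phi$) but as $\Theta$ itself, equivalently the image of $\epsilon^{bcd}\phi_c\psi_d$ under the weighted projection $\pi^*TM\to L$, whose non-vanishing is exactly what cuts out the `suitable open subset' of~$C$.
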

\begin{proof}
Recall from (\ref{dynkin}) that
$$P=\xox{1}{1}{-1}\quad\mbox{and}\quad V=\xox{-1}{1}{1}.$$
Also, from (\ref{dynkin}) we have
$$0\to\xox{1}{1}{-1}\to\pi^*TM\to\xox{1}{0}{1}\to 0,$$
a canonical short exact sequence on $C$ and, in particular, canonical 
surjections
$$\pi^*\Wedge_M^1\to\big(\,\xox{1}{1}{-1}\;\big)^*=\xox{-2}{1}{0}
\quad\mbox{and}\quad\pi^*\Wedge_M^1(w)\to\enskip\xox{w-2\;{}}{1}{0}$$
for any projective weight~$w$. Writing $\pi^!$ for the pullback $\pi^*$
followed by this surjection, firstly gives
$$\Theta\equiv\pi^!\phi\wedge\pi^!\psi
\in\Gamma(C,\xox{0}{1}{0}\wedge\xox{-1}{1}{0})=\Gamma(C,\xox{0}{0}{1})$$
and then, on the open set where $\Theta$ is non-vanishing,
$$E_1\equiv\Theta\,\pi^!\phi\in\Gamma(C,\xox{0}{1}{1})\qquad
E_2\equiv\Theta\,\pi^!\psi\in\Gamma(C,\xox{-1}{1}{1})$$
and 
$$E^1\equiv-\Theta^{-1}\pi^!\psi\in\Gamma(C,\xox{-1}{1}{-1})\qquad
E^2\equiv\Theta^{-1}\pi^!\phi\in\Gamma(C,\xox{0}{1}{-1}).$$
Bearing in mind that
$$\begin{array}{ll}
\xox{0}{1}{1}=V\otimes\Wedge_C^0(1)&\qquad
\xox{-1}{1}{1}=V\\[4pt]
\xox{-1}{1}{-1}=P\otimes\Wedge_C^0(-2)&\qquad
\xox{0}{1}{-1}=P\otimes\Wedge_C^0(-1)\,,
\end{array}$$
where $\Wedge_C^0(w)=\xox{w}{0}{0}=\pi^*\Wedge_M^0(w)$, we conclude that
$$S\equiv\pi^*\Wedge^0(2/3)\oplus\pi^*\Wedge^0(-1/3)
\ni\hspace{-77pt}\begin{array}[t]{cl}(\sigma,\tau)\\
\begin{picture}(10,15)
\put(5,5){\makebox(0,0){$\downarrow$}}
\put(5,10.2){\makebox(0,0){${}-{}$}}\end{picture}\\
\sigma^3\,E^1+\sigma^2\tau\,E^2+\tau^3\,E_1-3\sigma\tau^2\,E_2&
{}\hspace{-8pt}\in P\oplus V=H
\end{array}$$
is well defined. It is an invariant formulation of (\ref{twisted_cubic}) whose
range defines a twisted cubic cone in ${\mathbb{P}}(H)$ compatible with the
Levi form and hence a $G_2$ contact structure on~$C$.
\end{proof}
We remark that, although the projective weights $2/3$ and $-1/3$ in the
identification of $S$ may look contrived, they yield
$$\Wedge^2S\otimes\Wedge^2S\otimes\Wedge^2S
=\pi^*\Wedge^0(1)$$
and therefore, in accordance with the vector bundle version of~(\ref{Wedge2H}),
that $\pi^*\Wedge^0(w)=\gtwo{0}{w}$, as one might expect.

\medskip\noindent{\em Proof of Theorem~\ref{G2_flat_thm}.}\enskip Since
$\nabla_a$ is projectively flat we may suppose, without loss of generality,
that our manifold is ${\mathbb{R}}^3\hookrightarrow{\mathbb{RP}}_3$ with
$\nabla_a$ the standard flat connection. The operator
$\phi_b\mapsto\nabla_{(a}\phi_{b)}$ is the first BGG operator
$$\xoo{0}{1}{0}\xrightarrow{\,\nabla\,}\xoo{-2}{2}{0}\quad\mbox{on 
${\mathbb{RP}}_3$}$$
with kernel the irreducible representation
$\ooo{0}{1}{0}=\Wedge^2{\mathbb{R}}^4$ of ${\mathrm{SL}}(4,{\mathbb{R}})$
(acting by projective transformations on~${\mathbb{RP}}_3$). There are two
non-zero orbits for the action of ${\mathrm{SL}}(4,{\mathbb{R}})$ on
$\Wedge^4{\mathbb{R}}^4$ depending on rank and the non-degenerate case is
represented by $\phi=x\,dy-y\,dx+dz$, which we have already seen to be
incompatible with the second condition of~(\ref{G2_flat_conditions}). It 
follows that, without loss of generality, we may suppose~$\phi=dx$. 

The operator $\theta^b\mapsto(\nabla_a\theta^b)_\circ$ is also a first BGG 
operator 
$$\xoo{0}{0}{1}\xrightarrow{\,\nabla\,}\xoo{-2}{1}{1}$$
whose solution space is ${\mathbb{R}}^4=\ooo{0}{0}{1}$ as an
${\mathrm{SL}}(4,{\mathbb{R}})$-module. The degenerate $2$-form corresponding
to $\phi$ specifies a $2$-plane in ${\mathbb{R}}^4$ and so there are just two
cases for $\theta^b$ depending on whether the corresponding vector in 
${\mathbb{R}}^4$ lies in this plane or not. This is exactly whether 
$\theta^a\phi_a$ vanishes or not and, with $\theta^a$ being of the form 
$\epsilon^{abc}\phi_b\psi_c$, it must vanish. Therefore, without loss of 
generality $\theta^a=\partial/\partial z$. We have reached the following 
normal forms for $\phi$ and~$\psi$:
$$\phi=dx\quad\mbox{and}\quad\psi=\xi(x,y,z)\,dx+dy$$
where $\xi(x,y,z)$ is an arbitrary smooth function. It remains to consider the
remaining condition from~(\ref{G2_flat_conditions}), namely that
$\psi_{[a}\nabla_{b]}\psi_c=0$. It means that $\xi=\xi(x,y)$, a function of
$(x,y)$ alone, and that $\xi\xi_y=\xi_x$. In the computation that follows, we
shall find $\xi\xi_y-\xi_x$ as the only non-trivial component of the harmonic
curvature for the associated $G_2$ contact structure and our proof will be
complete.

The harmonic curvature is computed in Theorem~\ref{harmonic_curvature} below.
To use it we must specify the $G_2$ contact structure on $C$ in terms of an
adapted co-frame~(\ref{fully_adapted}). Starting with 
$$\omega^1=\phi=dx\quad\mbox{and}\quad\omega^2=\psi=\xi(x,y)\,dx+dy$$
on $M={\mathbb{R}}^3$ we may take 
$$\textstyle\omega^0=dz-a\,dx-b\,dy\qquad\omega^3=-\frac13\,db\qquad
\omega^4=da-\xi(x,y)\,db$$
in local co\"ordinates $(x,y,z,a,b)$ on~$C$, as in~\S\ref{contact}. These 
satisfy (\ref{fully_adapted}) with $\chi\equiv 1$. We compute
$$\begin{array}{ll}d\omega^1=0\quad&
d\omega^2=d\xi\wedge dx=-\xi_y\,\omega^1\wedge\omega^2\\[4pt]
d\omega^3=0&d\omega^4=-d\xi\wedge db
=3(\xi_x-\xi\xi_y)\,\omega^1\wedge\omega^3+3\xi_y\,\omega^2\wedge\omega^3
\end{array}$$
and we see that the only non-zero coefficients in (\ref{we_write}) are
$$a^2{}_{12}=-\xi_y\qquad a^4{}_{13}=3(\xi_x-\xi\xi_y)\qquad 
a^4{}_{23}=3\xi_y.$$
Substituting into (\ref{psi}) gives $\psi_0=\psi_1=\psi_2=\psi_3=\psi_4
=\psi_5=\psi_7=0$ and $\psi_6=6(\xi\xi_y-\xi_x)$, as claimed.
\hfill$\square$

\medskip{\bf Remarks.}\quad The partial differential equation $\xi\xi_y=\xi_x$ 
has plenty of local solutions. Indeed, if $F(t)$ is an arbitrary smooth 
function and we define $\xi(x,y)$ implicitly by the equation
$$F(\xi)=x\xi+y,$$
then 
$\xi\xi_y=\xi_x$. In particular, the foliation of ${\mathbb{R}}^3$ defined by
$\psi=\xi\,dx+dy$ need not be the planar foliation exhibited in the three
examples above. Therefore, we have found many projectively inequivalent
examples of structures given by data of the form (\ref{data}) that all give
rise to the same (flat) $G_2$ contact structure on the associated configuration
space.

\section*{Appendix: harmonic curvature of a $G_2$ contact structure}
As already mentioned (see ~\cite{CS} for the general theory), the harmonic
curvature of a $G_2$ contact structure is a section of the
bundle~$\gtwo{7}{-4}$. This binary septic may be obtained by the Cartan
equivalence method.

Specifically, a $G_2$ contact structure on a five-dimensional manifold $C$ may
be specified in terms of an adapted co-frame as follows. Firstly, we choose a
$1$-form $\omega^0$ whose kernel is the contact distribution $H\subset TC$. 
Non-degeneracy of the contact distribution says that 
$\omega^0\wedge d\omega^0\wedge d\omega^0\not=0$. The $G_2$ structure is then 
determined by completing $\omega^0$ to a co-frame 
$\omega^0,\omega^1,\omega^2,\omega^3,\omega^4$ so that
\begin{equation}\label{adapted}
d\omega^0=\chi(\omega^1\wedge\omega^4-3\omega^2\wedge\omega^3)
\enskip\bmod\omega^0\end{equation}
for some smooth function~$\chi$. Specifically, if $X_0,X_1,X_2,X_3,X_4$ is the 
dual frame, then $H={\mathrm{span}}\{X_1,X_2,X_3,X_4\}$ and the twisted cubic 
(\ref{twisted_cubic}) may be given as
\begin{equation}\label{twisted_cubic_again}
(s,t)\mapsto s^3X_1+s^2tX_2+st^2X_3+t^3X_4,\end{equation}
compatibility with the Levi form being a consequence of~(\ref{adapted}). 
Imposing the structure equations (\ref{adapted}) leaves precisely the 
following freedom in choice of co-frame:
\begin{equation}\label{thisisG}
\left[\!\!\begin{array}c\tilde\omega^0\\ \tilde\omega^1\\ 
\tilde\omega^2\\ \tilde\omega^3\\ \tilde\omega^4\end{array}\!\!\right]
\!\!=\!\!\left[\!\!\addtolength{\arraycolsep}{-1pt}\begin{array}{ccccc}
t_9&0&0&0&0\\ 
t_{10}&t_5{}^3&3t_5{}^2t_6&3t_5t_6{}^2&t_6{}^3\\
t_{11}&t_5{}^2t_7&t_5(t_5t_8+2t_6t_7)&t_6(2t_5t_8+t_6t_7)&t_6{}^2t_8\\ 
t_{12}&t_5t_7{}^2&t_7(2t_5t_8+t_6t_7)&t_8(t_5t_8+2t_6t_7)&t_6t_8{}^2\\ 
t_{13}&t_7{}^3&3t_7{}^2t_8&3t_7t_8{}^2&t_8{}^3
\end{array}\!\!\right]\!\!
\left[\!\!\begin{array}c\omega^0\\ \omega^1\\ 
\omega^2\\ \omega^3\\ \omega^4\end{array}\!\!\right]\end{equation}
for arbitrary functions $t_5,t_6,t_7,t_8,t_9,t_{10},t_{11},t_{12},t_{13}$ on
$C$ subject only to $t_9(t_5t_8-t_6t_7)\not=0$. (The functions
$t_5,t_6,t_7,t_8$ correspond to 
$$\raisebox{4pt}{$(s,t)\mapsto(s,t)$}
\left(\!\begin{array}{cc}t_5&t_7\\ t_6&t_8\end{array}\right)$$
as a change of parameterisation in (\ref{twisted_cubic_again}) whilst
$t_9,t_{10},t_{11},t_{12},t_{13}$ modify the co-frame with multiples
of~$\omega^0$.) To proceed with Cartan's method of equivalence, we now pass to
the bundle $\widetilde{C}\to C$ whose sections are frames adapted according
to~(\ref{adapted}). It is a $G$-principal bundle where $G$ is the
$9$-dimensional Lie subgroup of ${\mathrm{GL}}(5,{\mathbb{R}})$ given
in~(\ref{thisisG}) and comes tautologically equipped with $1$-forms
$\theta^0,\theta^1,\theta^2,\theta^3,\theta^4$ whose pull-backs along a section
are $\omega^0,\omega^1,\omega^2,\omega^3,\omega^4$, the co-frame on $C$
corresponding to that section. Cartan's aim is to extend this to an invariant
co-frame on $\widetilde{C}$ by making various normalisations. For our purposes
we need not take these normalisations too far. For calculation, we choose a
co-frame on $C$ adapted according to (\ref{adapted}) so that $\widetilde{C}$ is
identified as $G\times C$ and the forms
$\theta^0,\theta^1\,\theta^2,\theta^3,\theta^4$ are given as
$$\left[\!\!\begin{array}c\theta^0\\ \theta^1\\ 
\theta^2\\ \theta^3\\ \theta^4\end{array}\!\!\right]
\!\!=\!\!\left[\!\!\addtolength{\arraycolsep}{-1pt}\begin{array}{ccccc}
t_9&0&0&0&0\\ 
t_{10}&t_5{}^3&3t_5{}^2t_6&3t_5t_6{}^2&t_6{}^3\\
t_{11}&t_5{}^2t_7&t_5(t_5t_8+2t_6t_7)&t_6(2t_5t_8+t_6t_7)&t_6{}^2t_8\\ 
t_{12}&t_5t_7{}^2&t_7(2t_5t_8+t_6t_7)&t_8(t_5t_8+2t_6t_7)&t_6t_8{}^2\\ 
t_{13}&t_7{}^3&3t_7{}^2t_8&3t_7t_8{}^2&t_8{}^3
\end{array}\!\!\right]\!\!
\left[\!\!\begin{array}c\omega^0\\ \omega^1\\ 
\omega^2\\ \omega^3\\ \omega^4\end{array}\!\!\right].$$

\smallskip{\bf Step 0}\enskip Normalise the co-frame on $C$ so that
\begin{equation}\label{fully_adapted}
d\omega^0=\chi(\omega^1\wedge\omega^4-3\omega^2\wedge\omega^3).\end{equation}
This is easily achieved by the freedom
$$\tilde\omega^1=\omega^1+t_{10}\omega^0,\enskip
\tilde\omega^2=\omega^1+t_{11}\omega^0,\enskip
\tilde\omega^3=\omega^1+t_{12}\omega^0,\enskip
\tilde\omega^4=\omega^1+t_{13}\omega^0$$
and determines the functions $t_{10},t_{11},t_{12},t_{13}$.

\smallskip{\bf Step 1}\enskip Find $\theta^5$ such that
$$d\theta^0
=-6\theta^0\wedge\theta^5+\theta^1\wedge\theta^4-3\theta^2\wedge\theta^3.$$
This may be achieved by setting $-t_9=\Delta^3/\chi$, where 
$\Delta=t_6t_7-t_5t_8$ and then
$$\theta^5=-\frac16\frac{d\chi}{\chi}+\frac12\frac{d\Delta}{\Delta}
+\frac{\chi}{\Delta^3}\Big(\frac{t_{13}\theta^1}6-\frac{t_{12}\theta^2}2
+\frac{t_{11}\theta^3}2-\frac{t_{10}\theta^4}6\Big)+s\theta^0$$
for some function~$s$.

\smallskip{\bf Step 2}\enskip Find $\theta^7,\theta^8,\theta^9$ such that 
$$E^1\equiv d\theta^1-(6\theta^0\wedge\theta^9-3\theta^1\wedge\theta^5
-3\theta^1\wedge\theta^8+3\theta^2\wedge\theta^7)$$
is of the form $c^1{}_{\mu\nu}\theta^\mu\wedge\theta^v$ for 
$\mu,\nu=0,1,2,3,4$. It follows that
$$E^1\wedge\theta^0\wedge\theta^1\wedge\theta^2
=c^1{}_{34}\theta^0\wedge\theta^1\wedge\theta^2\wedge\theta^4\wedge\theta^5.$$
It turns out that
$$c^1{}_{34}=\frac{t_5{}^7}{\Delta^5}
\Big(\psi_0+\psi_1s+\psi_2s^2+\psi_3s^3+\psi_4s^4+\psi_5s^5+\psi_6s^6
+\psi_7s^7\Big),$$
where $\psi_0,\psi_1,\psi_2,\psi_3,\psi_4,\psi_5,\psi_6,\psi_7$ are functions 
on~$C$. These are the coefficients of the invariantly defined harmonic 
curvature. In fact, if
$$d\omega^0=\chi(\omega^1\wedge\omega^4-3\omega^2\wedge\omega^3)$$
on $C$ and we write
\begin{equation}\label{we_write}
d\omega^i=\sum_{1\leq j<k\leq4}a^i{}_{jk}\omega^j\wedge\omega^k
\enskip\bmod\omega^0,\enskip\mbox{for }i=1,2,3,4\end{equation}
then 
\begin{equation}\label{psi}\begin{array}{rcl}\psi_0&=&a^1{}_{34}\\
\psi_1&=&-2a^1{}_{24}+3a^2{}_{34}\\
\psi_2&=&3a^1{}_{14}+a^1{}_{23}-6a^2{}_{24}+3a{}^3{}_{34}\\
\psi_3&=&-2a^1{}_{13}+9a^2{}_{14}+3a^2{}_{23}-6a^3{}_{24}+a^4{}_{34}\\
\psi_4&=&a^1{}_{12}-6a^2{}_{13}+9a^3{}_{14}+3a^3{}_{23}-2a^4{}_{24}\\
\psi_5&=&3a^2{}_{12}-6a^3{}_{13}+3a^4{}_{14}+a^4{}_{23}\\
\psi_6&=&3a^3{}_{12}-2a^4{}_{13}\\
\psi_7&=&a^4{}_{12}.
\end{array}\end{equation}
{From} the general theory of parabolic geometry~\cite[Theorem~3.1.12]{CS} we find
the following characterisation of flat $G_2$ contact structures. 
\begin{thm}\label{harmonic_curvature}
The local symmetry algebra of the $G_2$ contact structure specified by a
co-frame adapted according to \eqref{fully_adapted} is the split exceptional
Lie algebra $G_2$ if and only if
$\psi_0,\psi_1,\psi_2,\psi_3,\psi_4,\psi_5,\psi_6,\psi_7$ given by \eqref{psi}
all vanish. Moreover, in this case, the manifold $C$ is locally isomorphic to
the homogeneous model.
\end{thm}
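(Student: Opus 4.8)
The plan is to reduce the statement to the general structure theory of parabolic geometries, treating the equivalence-method computation of the Appendix as the step that extracts the relevant invariant. A $G_2$ contact structure is a regular parabolic geometry of type $(\mathfrak{g}_2,\mathfrak{p})$ modelled on $\gtwo{}{}$, and by the equivalence of categories in~\cite{CS} it determines a canonical regular \emph{normal} Cartan connection. The harmonic curvature of such a geometry is a section of the bundle associated, via Kostant's theorem, to the single irreducible cohomology component, which here is the binary septic $\gtwo{7}{-4}$. I would first argue that the function $c^1{}_{34}$ isolated in Step~2, which appears as a degree-seven polynomial in the residual parameter $s$ with coefficients $\psi_0,\ldots,\psi_7$, is precisely this harmonic curvature expressed in a co-frame adapted according to~(\ref{fully_adapted}); formula~(\ref{psi}) then records these eight components in terms of the structure functions $a^i{}_{jk}$.

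Granting this identification, the theorem follows from \cite[Theorem~3.1.12]{CS}. That result identifies the harmonic curvature as a \emph{complete} obstruction to flatness: for a regular normal parabolic geometry the full Cartan curvature vanishes if and only if the harmonic curvature does, and a curvature-free geometry of type $(\mathfrak{g}_2,\mathfrak{p})$ is locally isomorphic to its homogeneous model $\gtwo{}{}$. Hence $\psi_0=\cdots=\psi_7=0$ forces the Cartan curvature to vanish identically, giving the ``moreover'' assertion, and the infinitesimal automorphisms of the flat model exhaust the split form of $\mathfrak{g}_2$. Conversely, since any non-flat geometry of this type has symmetry algebra of dimension strictly below $\dim\mathfrak{g}_2=14$, a local symmetry algebra equal to all of $\mathfrak{g}_2$ forces flatness and hence the vanishing of every $\psi_i$.

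I expect the substantive point to be the identification in the first paragraph rather than the appeal to~\cite{CS}. Two things must be confirmed: that the $G_2$ contact structure is genuinely regular and normal, so that the harmonic curvature is a complete obstruction and \cite[Theorem~3.1.12]{CS} applies; and that the normalisations carried out in Steps~0--2 implement the Kostant codifferential condition $\partial^*\kappa=0$ characterising the normal connection, so that the surviving obstruction $c^1{}_{34}$ lies in the harmonic part of the curvature rather than in some merely $\partial^*$-closed part. Once the eight functions $\psi_i$ are certified as the harmonic curvature in this invariant sense, no further computation is needed.
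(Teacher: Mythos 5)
Your proposal follows the paper's own route exactly: the paper likewise treats the equivalence-method computation of Steps 0--2 as the step identifying $c^1{}_{34}$, equivalently the eight functions $\psi_i$ of \eqref{psi}, with the harmonic curvature in $\gtwo{7}{-4}$\,, and then obtains the theorem by direct appeal to \cite[Theorem~3.1.12]{CS}, just as you do. The points you flag as needing verification (regularity, normality, and that the normalisations realise the Kostant codifferential condition) are precisely what the paper's identification of the $\psi_i$ as harmonic curvature coefficients implicitly carries, so the two arguments are essentially the same.
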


\medskip{\bf Remarks.}\quad The formul{\ae} (\ref{we_write}) and (\ref{psi})
may alternatively be derived as follows. According to~(\ref{extension}), the 
exterior derivative $d:\Wedge^1\to\Wedge^2$ gives rise, via the diagram
$$\begin{array}{ccccccccc}
0&\to&\gtwo{0}{-1}&\to&\Wedge^1&\to&\gtwo{3}{-2}&\to&0\\
&&&&d\downarrow\phantom{d}\\
0&\to&\gtwo{3}{-3}&\to&\Wedge^2&\to&
\gtwo{0}{-1}\oplus\gtwo{4}{-3}&\to&0,\\
\end{array}$$
to an invariantly defined first order differential operator 
\begin{equation}\label{rumin}\nabla:\gtwo{3}{-2}\to\gtwo{4}{-3}.
\end{equation}
In fact, this is nothing more than the second operator in the Rumin 
complex~\cite{R}, which depends only on the contact structure on~$C$. The 
exterior derivative, on the other hand, may be written as 
$\omega_b\mapsto\nabla_{[a}\omega_{b]}$ for any {\em torsion-free\/} 
connection $\nabla_a$ on $TC$. Thus, the Rumin operator 
(\ref{rumin}) may be written with {\em spinor indices\/}~\cite{OT}, adapted to 
our cause, as
\begin{equation}\label{rumin_with_spinors}
\omega_{ABC}\longmapsto\nabla_{(AB}{}^H\omega_{CD)H}.\end{equation}
One may readily check that the formul{\ae} (\ref{we_write}) and (\ref{psi}) 
amount to the stipulation that
\begin{equation}\label{stip}
\psi_{ABCDEFG}\pi^A\pi^B\pi^C\pi^D\pi^E\pi^F\pi^G
=\pi^A\pi^B\pi^C\pi^D\nabla_{AB}{}^H(\pi_C\pi_D\pi_H)\end{equation}
for all sections $\pi_A$ of $S^*=\gtwo{1}{\enskip-2/3}$\enskip. Note, by the 
Leibniz rule
$$\begin{array}{rcl}\pi^A\pi^B\pi^C\pi^D\nabla_{AB}{}^H(f\pi_C\pi_D\pi_H)
&=&f\pi^A\pi^B\pi^C\pi^D\nabla_{AB}{}^H(\pi_C\pi_D\pi_H)\\
&&\enskip{}+\pi^A\pi^B\underbrace{\pi^C\pi^D\pi_C}_{=0}\pi_D\pi_H
\nabla_{AB}{}^Hf,\end{array}$$
that the right hand side of (\ref{stip}) is homogeneous of degree $7$ over the
functions and, therefore, automatically of the form given on the left. It 
follows that $\psi_{ABCDEFG}$ is the obstruction to writing (\ref{rumin}) as
$$\omega_{ABC}\longmapsto{\mathcal{D}}_{(AB}{}^H\omega_{CD)H},$$
where ${\mathcal{D}}_{ABC}$ is induced by a connection on 
$S^*=\gtwo{1}{\enskip-2/3}$\quad, because the Leibniz rule for such a 
connection would imply that
$${\mathcal{D}}_{AB}{}^H(\pi_C\pi_D\pi_H)
=\pi_C\pi_D{\mathcal{D}}_{AB}{}^H\pi_H
+\pi_C\pi_H{\mathcal{D}}_{AB}{}^H\pi_D
+\pi_D\pi_H{\mathcal{D}}_{AB}{}^H\pi_C$$
and the right hand side of (\ref{stip}) would therefore vanish. In other words,
the formula (\ref{rumin_with_spinors}) depends on $\nabla_a$ being torsion-free
and $\psi_{ABCDEFG}$ may be seen as some invariant part of the 
{\em partial torsion\/} of a freely chosen spinor connection
${\mathcal{D}}_a:S\to\Wedge^1\otimes S$.

More specifically, suppose ${\mathcal{D}}_a:S\to\Wedge^1\otimes S$ is any
connection and define its {\em partial torsion\/} 
$T_{ABCD}{}^{EFG}=T_{(ABCD)}{}^{(EFG)}$ according to
$${\mathcal{D}}_{(AB}{}^E{\mathcal{D}}_{CD)E}f
=T_{ABCD}{}^{EFG}{\mathcal{D}}_{EFG}f,\enskip
\forall\mbox{ smooth functions }f.$$
Changing the connection~${\mathcal{D}}_a$, leads to a change of 
{\em partial connection\/} ${\mathcal{D}}_{ABC}:S^*\to\gtwo{3}{-2}\otimes S^*$
according to
$$\widehat{\mathcal{D}}_{ABC}\pi_D
={\mathcal{D}}_{ABC}\pi_D-\Gamma_{ABCD}{}^E\pi_E,
\enskip\mbox{where }\Gamma_{ABCD}{}^E=\Gamma_{(ABC)D}{}^E$$
and, therefore, an induced change on 
$\gtwo{3}{-2}=\bigodot^3\!S^*$, namely 
$$\widehat{\mathcal{D}}_{ABC}\omega_{DEF}=
{\mathcal{D}}_{ABC}\omega_{DEF}-3\Gamma_{ABC(D}{}^G\omega_{EF)G}.$$  
It follows that
$$\widehat{\mathcal{D}}_{AB}{}^E\widehat{\mathcal{D}}_{CDE}f
={\mathcal{D}}_{AB}{}^E{\mathcal{D}}_{CDE}f
-3\Gamma_{AB}{}^E{}_{(C}{}^G{\mathcal{D}}_{DE)G}f$$
and, therefore, that
$$\widehat{\mathcal{D}}_{(AB}{}^E\widehat{\mathcal{D}}_{CD)E}f
\!=\!{\mathcal{D}}_{(AB}{}^E{\mathcal{D}}_{CD)E}f
-2\Gamma_{(AB}{}^E{}_C{}^F{\mathcal{D}}_{D)EF}f
+\Gamma_{H(AB}{}^{HE}{\mathcal{D}}_{CD)E}f$$
whence the partial torsion of ${\mathcal{D}}_a$ changes according to
$$\widehat{T}_{ABCD}{}^{EFG}=T_{ABCD}{}^{EFG}
-2\Gamma_{(AB}{}^{(E}{}_C{}^F\delta_{D)}{}^{G)}
+\Gamma_{H(AB}{}^{H(E}\delta_C{}^F\delta_{D)}{}^{G)}.$$
In particular, the trace-free part of $T_{ABCD}{}^{EFG}$, equivalently
$$\psi_{ABCDEFG}\equiv T_{(ABCDEFG)},$$
is an invariant of the $G_2$ contact structure.

\end{document}